\newtheorem{theorem}{Theorem}[section]
\newtheorem{lemma}[theorem]{Lemma}
\newtheorem{proposition}[theorem]{Proposition}
\newenvironment{proof}[1][Proof]{\noindent \textit{#1.}  }{\ \rule{0.5em}{0.5em} \par }
\newtheorem{prop}[theorem]{Proposition}
\let\Section=\section
\def\section{\setcounter{equation}{0}\Section}
\begin{document}

\title{H\"{o}lder Continuity of the Solution for a Class of Nonlinear SPDE
Arising from One Dimensional Superprocesses}
\author{Yaozhong Hu, \ Fei Lu \ and \ David Nualart\thanks{
David Nualart is supported by the NSF grant DMS0604207. \newline
Keywords: nonlinear stochastic partial differential equation, stochastic
heat kernel, conditional probability density in a random environment,
Malliavin calculus, H\"{o}lder continuity, moment estimates.
} \\
Department of Mathematics \\
University of Kansas \\
Lawrence, Kansas, 66045 USA}
\date{}
\maketitle

\begin{abstract}
The H\"{o}lder continuity of the solution $X_{t}(x)$ to a nonlinear
stochastic partial differential equation (see (\ref{equ1}) below) arising
from one dimensional super process is obtained. It is proved that the H\"{o}%
lder exponent in time variable is as close as to $1/4$, improving the result
of $1/10$ in \cite{LWXZ11}. The method is to use the Malliavin calculus. The
H\"{o}lder continuity in spatial variable $x$ of exponent $1/2$ is also
obtained by using this new approach. This H\"{o}lder continuity result is
sharp since the corresponding linear heat equation has the same H\"{o}lder
continuity.
\end{abstract}

\section{Introduction}

Consider a system of particles indexed by multi-indexes ${\alpha }$ in a
random environment whose motions are described by
\begin{equation}
x_{\alpha }\left( t\right) =x_{\alpha }+B^{\alpha }\left( t\right)
+\int_{0}^{t}\int_{\mathbb{R}}h\left( y-x_{\alpha }\left( u\right) \right)
W\left( du,dy\right) ,~  \label{Particle}
\end{equation}%
where $h\in L^{2}(\mathbb{R})$, $(B^{\alpha }(t);t\geq 0)_{\alpha }$ are
independent Brownian motions and $W$ is a Brownian sheet on $\mathbb{R}%
_{+}\times \mathbb{R}$ independent of $B^{\alpha }$. For more detail about
this model, we refer to Wang (\cite{W97}, \cite{W98}) and Dawson, Li and
Wang \cite{DLW01}. Under some specifications for the branching mechanism and
in the limiting situation, Dawson, Vaillancourt and Wang \cite{DVW00}
obtained that the density of the branching particles satisfies the following
stochastic partial differential equation (SPDE):
\begin{eqnarray}
X_{t}(x) &=&\mu (x)+\int_{0}^{t}\Delta X_{u}(x)dr-\int_{0}^{t}\int_{\mathbb{R%
}}\nabla _{x}\left( h\left( y-x\right) X_{u}\left( x\right) \right) W\left(
du,dy\right)  \notag \\
&&+\int_{0}^{t}\sqrt{X_{u}\left( x\right) }\frac{V\left( du,dx\right) }{dx},
\label{equ1}
\end{eqnarray}%
where $V$ is a Brownian sheet on $\mathbb{R}_{+}\times \mathbb{R}$
independent of $W$. The joint H\"{o}lder continuity of $\left( t,x\right)
\longmapsto X_{t}(x)$ is left as an open problem in \cite{DVW00}.

Let $H_{2}^{k}(\mathbb{R})=\left\{ u\in L^{2}(\mathbb{R});u^{(i)}\in L^{2}(%
\mathbb{R})\text{ for }i=1,2,\dots ,k\right\} $, the Sobolev space with norm
$\left\Vert h\right\Vert _{k,2}^{2}=\sum_{i=0}^{k}\left\Vert
h^{(k)}\right\Vert _{L^{2}(\mathbb{R})}^{2}$. In a recent paper, Li, Wang,
Xiong and Zhou \cite{LWXZ11} proved that $X_{t}(x)$ is almost surely jointly
H\"{o}lder continuous, under the condition that $h\in H_{2}^{2}(\mathbb{R})$
with $\left\Vert h\right\Vert _{1,2}^{2}<2$ and $X_{0}=\mu \in H_{2}^{1}(%
\mathbb{R})$ is bounded. More precisely, they showed that for fixed $t$ its H%
\"{o}lder exponent in $x$ is in $\left( 0,1/2\right) $ and for fixed $x$ its
H\"{o}lder exponent in $t$ is in $\left( 0,1/10\right) $. Comparing to the H%
\"{o}lder continuity for the stochastic heat equation which has the H\"{o}%
lder continuity of $1/4$ in time, it is conjectured that the H\"{o}lder
continuity of $X_{t}(x)$ should also be $1/4$.

The aim of this paper is to provide an affirmative answer to the above
conjecture. Here is the main result of this paper.

\begin{theorem}
\label{main}Suppose that $h\in H_{2}^{2}(\mathbb{R})$ and $X_{0}=\mu \in
L^{2}(\mathbb{R})$ is bounded. Then the solution to $X_{t}\left( x\right) $
is jointly H\"{o}lder continuous with the H\"{o}lder exponent in $x$ in $%
\left( 0,1/2\right) $ and with the H\"{o}lder exponent in $t$ in $\left(
0,1/4\right) $. That is, for any $0\leq s<t\leq T$, $x,y\in \mathbb{R}$ and $%
p\geq 1$, there exists a constant $C$ depending only on $p,T$, $\left\Vert
h\right\Vert _{2,2}$ and $\left\Vert \mu \right\Vert _{L^{2}(\mathbb{R})}$
such that
\begin{equation}
E\left\vert X_{t}\left( y\right) -X_{s}\left( x\right) \right\vert ^{2p}\leq
C(1+t^{-p})(\left\vert x-y\right\vert ^{p-\frac{1}{2}}+(t-s)^{\frac{p}{2}-%
\frac{1}{4}}).  \label{JointH}
\end{equation}
\end{theorem}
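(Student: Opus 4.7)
My plan is to reduce the theorem to the moment estimate \eqref{JointH}, from which Kolmogorov's continuity criterion delivers the joint H\"older regularity claimed in both variables: taking $p$ large, the exponents $(p-\tfrac12)/(2p)$ and $(p/2-\tfrac14)/(2p)$ approach $1/2$ and $1/4$ respectively. The first step is to recast \eqref{equ1} in a mild form using a \emph{stochastic fundamental solution} $p^W(s,y;t,x)$, adapted to the filtration generated by the environment noise $W$. Concretely, I would introduce the characteristic SDE
\[
\phi_u^{s,x} = x + (B_u - B_s) + \int_s^u\!\!\int_{\mathbb{R}} h\bigl(y - \phi_r^{s,x}\bigr)\, W(dr,dy), \qquad u \geq s,
\]
and define $p^W(s,y;t,x)$ as the conditional density of $\phi_t^{s,x}$ given $W$. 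Then equation \eqref{equ1} should admit the representation
\[
X_t(x) = \int_{\mathbb{R}} p^W(0,y;t,x)\,\mu(y)\,dy + \int_0^t\!\!\int_{\mathbb{R}} p^W(s,y;t,x)\sqrt{X_s(y)}\, V(ds,dy),
\]
built into the equation so that the first-order noise is absorbed into the kernel rather than acting on top of the deterministic heat semigroup.

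The heart of the argument will be sharp moment bounds on $p^W$ and on its spatial and temporal increments, obtained by Malliavin calculus with respect to $B$ with $W$ held fixed. Representing $p^W(s,y;t,x)$ via the integration-by-parts formula applied to the conditional density of $\phi_t^{s,x}$, and controlling the conditional Malliavin matrix (whose non-degeneracy is of order $(t-s)$ when $h\in H_2^2(\mathbb{R})$), I expect Gaussian-type estimates of the schematic form
\[
E\bigl[p^W(s,y;t,x)^{2p}\bigr] \leq C_p\, (t-s)^{-p + 1/2},
\]
together with increment estimates
\[
E\bigl[\,|p^W(s,y;t,x) - p^W(s,y;t,x')|^{2p}\,\bigr] \leq C_p\, (t-s)^{-p - 1/2}\,|x-x'|^{\,p - 1/2},
\]
and an analogous temporal increment estimate producing the factor $(t-s)^{p/2 - 1/4}$. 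This is precisely the step where incorporating $W$ inside the kernel pays off: the absence of a deterministic semigroup convolved against a gradient-type noise is what unlocks the sharper exponents over the $1/10$ bound of \cite{LWXZ11}.

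Once these kernel estimates are in hand, I would split $X_t(y) - X_s(x)$ into a spatial increment at the later time and a temporal increment at the earlier point, treating the initial-condition contribution and the $V$-stochastic integral separately. For the initial-condition piece, boundedness of $\mu$ and the Gaussian-type bound on $p^W$ control the two increments directly, producing the $t^{-p}$ singularity in \eqref{JointH}. For the martingale piece, the Burkholder--Davis--Gundy inequality reduces matters to the product of an $L^p$ moment of $X_s(y)$ (uniformly bounded by a standard a priori argument) and the spatial or temporal increment moment of $p^W$, from which the exponents $p-\tfrac12$ and $p/2-\tfrac14$ emerge after integrating the squared kernel increment over the space-time domain. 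The principal obstacle will be the Malliavin-calculus analysis of $p^W$: one must establish uniform-in-$x$ bounds on the Malliavin derivatives of $\phi_t^{s,x}$ together with all negative moments of the inverse conditional Malliavin matrix, and then balance very carefully the singular factors $(t-s)^{-1/2}$ produced by differentiating the kernel against the small quantities $|x-x'|^{1/2}$ and $(t-s)^{1/4}$ borrowed from the increments, losing nothing in the exponent. This tight bookkeeping is exactly what will produce the optimal $1/4$ in the time variable.
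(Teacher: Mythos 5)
Your architecture coincides exactly with the paper's: the random kernel $p^W$ is the conditional transition density of the characteristic SDE \eqref{xi def} given $W$, the mild representation is \eqref{cnvl rep}, and the kernel is analyzed by Malliavin calculus in $B$ with $W$ frozen, writing $p^{W}(r,x;t,y)=E^{B}[\mathbf{1}_{\{\xi_{t}>y\}}\delta(u_{t})]$ with $u_{t}=D\xi_{t}/\Vert D\xi_{t}\Vert_{H}^{2}$ and using $E\delta(u)=0$ to extract the Gaussian tail in $x-y$ (Lemma \ref{dens bd}). The splitting into the initial-condition term and the $V$-martingale term, the BDG-type reduction via \cite[Lemma 4.1]{LWXZ11} to space-time integrals of squared kernel increments, and the concluding Kolmogorov step are likewise as in the paper. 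So the plan is not a different route; the question is only whether it is complete.

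It is not: the decisive estimate --- the temporal increment of the kernel, which is precisely where $1/10$ becomes $1/4$ --- is asserted (``an analogous temporal increment estimate producing the factor $(t-s)^{p/2-1/4}$'') rather than derived, and nothing in your outline indicates the mechanism. The paper's device is the decomposition
\begin{equation*}
p^{W}(r,x;t,y)-p^{W}(r,x;s,y)=E^{B}\left[\left(\mathbf{1}_{\{\xi_{t}>y\}}-\mathbf{1}_{\{\xi_{s}>y\}}\right)\delta(u_{t})\right]+E^{B}\left[\mathbf{1}_{\{\xi_{s}>y\}}\,\delta(u_{t}-u_{s})\right],
\end{equation*}
where the first term is controlled by the width $\left\vert B_{s}^{t}+I_{s}^{t}(h)\right\vert \sim (t-s)^{1/2}$ of the random interval $\{\xi_{s}\leq y<\xi_{t}\}$ (the source of the exponent $\frac{1}{2}-\frac{1}{4p}$, hence $\frac{p}{2}-\frac{1}{4}$ after the BDG step), and the second requires the genuinely nontrivial bound $\Vert\delta(u_{t}-u_{s})\Vert_{2p}\leq C(t-s)^{1/2}(s-r)^{-1/2}(t-r)^{-1/2}$ of Lemma \ref{delta timediff}, which in turn rests on first- and second-order Malliavin derivative estimates for the difference $\xi_{t}-\xi_{s}$ (Lemma \ref{D12diff}). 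Without this decomposition, or an equivalent device, a direct attack on $E\vert p^{W}(r,x;t,y)-p^{W}(r,x;s,y)\vert^{2p}$ does not obviously yield anything better than the exponent of \cite{LWXZ11}; the step you defer as ``the principal obstacle'' is in fact the entire content of the improvement, so the proposal as written has a real gap at its center even though every structural choice surrounding it is the right one.
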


Note that the term $t^{-p}$ in the right hand side of (\ref{JointH}) implies
that the H\"{o}lder norm of $X_{t}(x)$ blows up as $t\rightarrow 0$. This
problem arises naturally since we only assume $X_{0}=\mu \in L^{2}(\mathbb{R}%
)$.

When $h=0$ the equation (\ref{equ1}) is reduced to the famous
Dawson-Watanabe equation (process). The study on the joint H\"{o}lder
continuity for this equation has been studied by Konno and Shiga \cite{KS88}
and Reimers \cite{Re89}. The starting point is to interpret the equation
(when $h=0$) in mild form with the heat kernel associated with the Laplacian
$\Delta $ in (\ref{equ1}). Then the properties of the heat kernel (Gaussian
density) can be fully used to analyze the H\"{o}lder continuity.

The straightforward extension of the mild solution concept and technique to
general nonzero $h$ case in (\ref{equ1}) meets a substantial difficulty. To
overcome this difficulty, Li et al \cite{LWXZ11} replace the heat kernel by
a random heat kernel associated with
\begin{equation*}
\int_{0}^{t}\Delta X_{u}(x)dr-\int_{0}^{t}\int_{\mathbb{R}}\nabla
_{x}(h\left( y-x\right) X_{u}(x))W\left( du,dy\right) .
\end{equation*}%
The random heat kernel is given by the conditional transition function of a
typical particle in the system with $W$ given. To be more precise, consider
the spatial motion of a typical particle in the system:%
\begin{equation}
\xi _{t}=\xi _{0}+B_{t}+\int_{0}^{t}\int_{\mathbb{R}}h\left( y-\xi
_{u}\right) W\left( du,dy\right) ,  \label{xi def}
\end{equation}%
where $(B_{t};t\geq 0)$ is a Brownian motion. For $r\leq t$ and $x\in
\mathbb{R}$, define the conditional (conditioned by $W$) transition
probability by
\begin{equation}
P_{t}^{r,x,W}\left( \cdot \right) \equiv P^{W}\left( \xi _{t}\in \cdot |\xi
_{r}=x\right) .  \label{p def}
\end{equation}%
Denote by $p^{W}\left( r,x;t,y\right) $ the density of $P_{t}^{r,x,W}\left(
\cdot \right) $. It is proved that $X_{t}(y)$ has the following convolution
representation:
\begin{eqnarray}
X_{t}(y) &=&\int_{\mathbb{R}}\mu (z)p^{W}\left( 0,z;t,y\right)
dz+\int_{0}^{t}\int_{\mathbb{R}}p^{W}\left( r,z;t,y\right) Z\left(
dr,dz\right)  \label{cnvl rep} \\
&\equiv &X_{t,1}(y)+X_{t,2}(y),  \notag
\end{eqnarray}%
where $Z\left( dr,dz\right) =\sqrt{X_{r}\left( z\right) }V\left(
dr,dz\right) $. Then they introduce a fractional integration by parts
technique to obtain the H\"{o}lder continuity estimates, using Krylov's $%
L_{p}$ theory (cf. Krylov \cite{Kr99}) for linear SPDE.

In this paper, we shall use the techniques from Malliavin calculus to obtain
more precise estimates for the conditional transition function $p^{W}\left(
r,x;t,y\right) $. This allows us to improve the H\"{o}lder continuity in the
time variable for the solution $X_{t}(x)$.

The rest of the paper is organized as follows: In Section 2, we briefly
recall some notations and results on Malliavin calculus. Then we derive
moment estimates for the conditional transition function in Section 3. We
study the H\"{o}lder continuity in spatial and time variables of $X_{t}(x)$
in Section 4 and Section 5 respectively. The proof of Theorem \ref{main} is
concluded in Section 5.

Along the paper, we shall use the following notations; $\left\Vert \cdot
\right\Vert _{H}$ denotes the norm on Hilbert space $H=L^{2}\left( \left[ 0,T%
\right] \right) $, \ $\left\Vert \cdot \right\Vert $ (and $\left\Vert \cdot
\right\Vert _{p}$) denotes the norm on $L^{2}\left( \mathbb{R}\right) $ (and
on $L^{p}\left( \Omega \right) $). The expectation on $\left( \Omega ,%
\mathcal{F},P\right) $ is denoted by $E$ and the conditional expectation
with respect to the process $W$ is denoted by $E^{B}$.

We denote by $C$ a generic positive constant depending only on $p,$ $T$, $%
\left\Vert h\right\Vert _{2,2}$ and $\left\Vert \mu \right\Vert
_{L^{2}\left( \mathbb{R}\right) }$.

\setcounter{equation}{0}

\section{Preliminaries}

Fix a time interval $[0,T]$. Let $(B_{t};t\geq 0)$ be a standard Brownian
motion. Let $\mathcal{S}$ denote the class of smooth random variables of the
form $F=f(B_{t_{1}},...,B_{t_{n}})$, where $t_{1},...,t_{n}$ $\in \lbrack
0,T]$, $n\geq 1$, and $f$ $\in $ $C_{p}^{\infty }(\mathbb{R}^{n})$, the set
of smooth functions $f$ such that $f$ itself and all its partial derivatives
have at most polynomial growth. Given $F=f(B_{t_{1}},...,B_{t_{n}})$ in $%
\mathcal{S}$, its Malliavin derivative $DF$ is the $H$--valued ($%
H=L^{2}\left( \left[ 0,T\right] \right) $) random variable given by
\begin{equation*}
D_{t}F=\sum_{i=1}^{n}\frac{\partial f}{\partial x_{i}}%
(B_{t_{1}},...,B_{t_{n}}))\mathbf{1}_{[0,t_{i}]}(t).
\end{equation*}%
The derivative operator $D$ is a closable and unbounded operator on $%
L^{2}(\Omega )$ taking values in $L^{2}(\Omega ,H)\,$. For any $p\geq 1$, we
denote by $\mathbb{D}^{1,p}$ the closure of $\mathcal{S}$ with respect to
the norm $\left\Vert \cdot \right\Vert _{1,p}$ given by:
\begin{equation*}
\left\Vert DF\right\Vert _{1,p}^{p}=E(\left\vert F\right\vert
^{p})+E(\left\vert \left\vert DF\right\vert \right\vert _{H}^{p}).
\end{equation*}%
We denote by $\delta $ the adjoint operator of $D$, which is an unbounded
from a domain of $L^{2}(\Omega ,H)$ to $L^{2}(\Omega )$. In particular, if $%
u\in Dom(\delta )$, then $\delta (u)$ is characterized by the following
duality relation:
\begin{equation*}
E(\delta (u)F)=E(\langle DF,u\rangle _{H})\quad
\hbox{for any
$F\in \mathbb{D}^{1,2}$}.
\end{equation*}%
The operator $\delta $ is called the $divergence$ operator. The following
two lemmas are from \cite{Nu06}, Propositions 1.5.4 and 2.1.1 and are used
frequently in this paper.

\begin{lemma}
\label{delta norm} The divergence operator $\delta $ is continuous from $%
\mathbb{D}^{1,p}\left( H\right) $ to $L^{p}\left( \Omega \right) ,$ for any $%
p>1$. That is, there exists a constant $C_{p}$ such that
\begin{equation}
\left\Vert \delta \left( u\right) \right\Vert _{L^{p}\left( \Omega \right)
}\leq C_{p}\left( \left\Vert Eu\right\Vert _{H}+\left\Vert Du\right\Vert
_{L^{p}\left( \Omega ,H\otimes H\right) }\right) .  \label{delta}
\end{equation}
\end{lemma}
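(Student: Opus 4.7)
The plan is to prove the estimate by decomposing $u$ into its deterministic mean and a centered part, then handling the two pieces with different tools. Write $u = Eu + v$ with $v = u - Eu$, so that $\delta(u) = \delta(Eu) + \delta(v)$. Since $Eu$ is a deterministic element of $H$, the divergence $\delta(Eu)$ coincides with the Wiener integral $\int_0^T (Eu)(t)\,dB_t$, and the Burkholder--Davis--Gundy inequality yields
\begin{equation*}
\|\delta(Eu)\|_{L^p(\Omega)} \leq C_p \|Eu\|_H.
\end{equation*}

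For the centered part $v$, I would invoke Meyer's inequality on Wiener space: for every $p>1$ there is a constant $c_p$ such that $\|\delta(w)\|_{L^p(\Omega)} \leq c_p \|w\|_{\mathbb{D}^{1,p}(H)}$, where $\|w\|_{\mathbb{D}^{1,p}(H)}^p = E\|w\|_H^p + E\|Dw\|_{H\otimes H}^p$. Applying this to $v$ and then invoking the Poincar\'e-type inequality on Wiener space for centered $H$-valued random variables, namely $E\|v\|_H^p \leq C_p E\|Dv\|_{H\otimes H}^p$, one obtains
\begin{equation*}
\|\delta(v)\|_{L^p(\Omega)} \leq C_p \|Du\|_{L^p(\Omega; H\otimes H)},
\end{equation*}
since $Dv = Du$. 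Combining the two estimates produces (\ref{delta}).

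The main obstacle is the justification of Meyer's inequality itself, which is the deep ingredient hidden behind the statement. The standard approach is through the Ornstein--Uhlenbeck semigroup $T_s = e^{sL}$ together with its Mehler representation: one shows that the composition $(-L)^{-1/2}\delta$ acts as a Riesz-type transform bounded on $L^p(\Omega)$ for every $p>1$, via Meyer's multiplier theorem or equivalently through iterated applications of Nelson's hypercontractivity. The companion vector-valued Poincar\'e inequality follows in the same framework from the spectral gap of $L$ combined with hypercontractive bounds. Since the full development of this machinery would be a substantial digression from the SPDE analysis that is the focus of the paper, I would simply quote the relevant statements from Nualart's monograph, as the authors do here.
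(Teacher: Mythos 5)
Your argument is correct, and it is essentially the standard proof of this estimate. Note, however, that the paper does not prove this lemma at all: it is quoted directly from Nualart's monograph \cite{Nu06} (Proposition 1.5.4), so there is no internal proof to compare against. Your sketch --- splitting $u = Eu + (u - Eu)$, treating $\delta(Eu)$ as a Gaussian Wiener integral whose $L^p$ norms are all comparable to $\|Eu\|_H$, and controlling the centered part by Meyer's inequality combined with the $L^p$ Poincar\'e-type bound $E\|u - Eu\|_H^p \le C_p\, E\|Du\|_{H\otimes H}^p$ --- is precisely how the cited proposition is derived in \cite{Nu06} from the continuity of $\delta$ on $\mathbb{D}^{1,p}(H)$. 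The one ingredient you leave as a black box, Meyer's inequality (together with the companion Poincar\'e inequality, which is itself obtained from the same Ornstein--Uhlenbeck machinery), is genuinely deep, and deferring it to the reference is exactly what the authors do; so your proposal is both correct and faithful to the paper's treatment.
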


\begin{lemma}
\label{density} Let $F$ be a random variable in the space $\mathbb{D}^{1,2}$%
, and suppose that $\frac{DF}{\left\Vert DF\right\Vert _{H}^{2}}$ belongs to
the domain of the operator $\delta $ in $L^{2}\left( \Omega \right) $. Then
the law of $F$ has a continuous and bounded density given by%
\begin{equation*}
p\left( x\right) =E\left[ \mathbf{1}_{\left\{ F>x\right\} }\delta \left(
\frac{DF}{\left\Vert DF\right\Vert _{H}^{2}}\right) \right] .
\end{equation*}
\end{lemma}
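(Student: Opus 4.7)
The plan is to derive the density formula through a Malliavin integration by parts argument. I would first fix a test function $\psi \in C_{c}^{\infty}(\mathbb{R})$ and introduce its primitive $\varphi(y) = \int_{-\infty}^{y} \psi(z)\,dz$, which is smooth and bounded with $\varphi' = \psi$ bounded. Since $F \in \mathbb{D}^{1,2}$ and $\varphi$ is Lipschitz with bounded derivative, the standard chain rule for the Malliavin derivative (Proposition 1.2.3 in \cite{Nu06}) gives $\varphi(F) \in \mathbb{D}^{1,2}$ with $D(\varphi(F)) = \psi(F)\, DF$.

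Next, setting $u = DF/\|DF\|_{H}^{2}$, which belongs to $\mathrm{Dom}(\delta)$ by hypothesis, the duality between $D$ and $\delta$ applied to $\varphi(F) \in \mathbb{D}^{1,2}$ yields
\begin{equation*}
E[\varphi(F)\, \delta(u)] \,=\, E[\langle D(\varphi(F)),\, u\rangle_{H}] \,=\, E\!\left[\psi(F)\, \frac{\langle DF, DF\rangle_{H}}{\|DF\|_{H}^{2}}\right] \,=\, E[\psi(F)].
\end{equation*}
Writing $\varphi(F) = \int_{\mathbb{R}} \psi(z)\, \mathbf{1}_{\{F>z\}}\, dz$ and interchanging expectation and integral by Fubini (justified since $\psi$ is bounded, compactly supported and $\delta(u) \in L^{2}(\Omega)$), we obtain
\begin{equation*}
E[\psi(F)] \,=\, \int_{\mathbb{R}} \psi(z)\, E\!\left[\mathbf{1}_{\{F>z\}}\, \delta(u)\right] dz.
\end{equation*}
Since $\psi \in C_{c}^{\infty}(\mathbb{R})$ is arbitrary, this identifies the law of $F$ as absolutely continuous with density $p(z) = E[\mathbf{1}_{\{F>z\}}\, \delta(u)]$.

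To finish, boundedness of $p$ is immediate from $|p(z)| \leq E|\delta(u)| \leq \|\delta(u)\|_{L^{2}(\Omega)}$, and continuity of $p$ follows by dominated convergence once one notes that $P(F=z)=0$ for every $z$, which is a consequence of the density formula just derived. The main technical subtlety is justifying the integration by parts identity, i.e. verifying that $\varphi(F) \in \mathbb{D}^{1,2}$ so that the duality with $u \in \mathrm{Dom}(\delta)$ applies; this reduces to the Malliavin chain rule for Lipschitz functions of a $\mathbb{D}^{1,2}$ random variable, and holds because $\varphi$ is smooth with bounded derivative, so no additional truncation or localization is required.
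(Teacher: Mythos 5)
Your argument is correct, but note that the paper does not prove this lemma at all: it is quoted verbatim from \cite{Nu06} (Proposition 2.1.1), so there is no in-paper proof to compare against. What you have written is essentially the standard proof from that reference --- chain rule for $\varphi(F)$ with $\varphi$ a primitive of a test function, the duality between $D$ and $\delta$ to get $E[\psi(F)]=E[\varphi(F)\delta(u)]$, Fubini to identify the density, and then boundedness and continuity of $x\mapsto E[\mathbf{1}_{\{F>x\}}\delta(u)]$ via dominated convergence using $P(F=x)=0$ --- and all the steps you flag as needing justification (the chain rule hypothesis, the Fubini interchange, the a.s.\ nonvanishing of $\|DF\|_{H}$ implicit in the hypothesis on $DF/\|DF\|_{H}^{2}$) are handled correctly.
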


From $E\delta (u)=0$ for any $u\in Dom(\delta )$ and the H\"{o}lder
inequality it follows that

\begin{lemma}
\label{dens ineq}Let $F$ be a random variable and let $u$ $\in $ $\mathbb{D}%
^{1,q}\left( H\right) $ with $q>1.$ Then for the conjugate pair $p$ and $q$
(i.e. $\frac{1}{p}+\frac{1}{q}=1$),
\begin{equation}
\left\vert E\left[ \mathbf{1}_{\left\{ F>x\right\} }\delta \left( u\right) %
\right] \right\vert \leq \left( P\left( \left\vert F\right\vert >\left\vert
x\right\vert \right) \right) ^{\frac{1}{p}}\left\Vert \delta \left( u\right)
\right\Vert _{L^{q}\left( \Omega \right) }.  \label{density ineq}
\end{equation}
\end{lemma}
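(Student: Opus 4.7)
The plan is to combine the vanishing of $E[\delta(u)]$ with a direct application of Hölder's inequality, handling the sign of $x$ separately so as to replace $F>x$ by an event controlled by $|F|>|x|$.

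First I would record the auxiliary fact that $E[\delta(u)]=0$ whenever $u\in\mathrm{Dom}(\delta)$. This is immediate from the duality relation displayed just before Lemma~\ref{delta norm}: taking $F\equiv 1\in\mathbb{D}^{1,2}$ we have $DF=0$, so $E[\delta(u)]=E[\langle DF,u\rangle_H]=0$. Under the hypothesis $u\in\mathbb{D}^{1,q}(H)$ with $q>1$, Lemma~\ref{delta norm} also ensures $\delta(u)\in L^q(\Omega)$, so that all the Hölder pairings below are finite.

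Next I would split into two cases according to the sign of $x$. If $x\geq 0$, the ordinary Hölder inequality with conjugate exponents $p,q$ gives
\begin{equation*}
\bigl|E[\mathbf{1}_{\{F>x\}}\delta(u)]\bigr|\leq \bigl(E\mathbf{1}_{\{F>x\}}\bigr)^{1/p}\,\|\delta(u)\|_{L^q(\Omega)}=P(F>x)^{1/p}\,\|\delta(u)\|_{L^q(\Omega)},
\end{equation*}
and the set inclusion $\{F>x\}\subseteq\{|F|>|x|\}$ (valid for $x\geq 0$) yields the claimed bound. If instead $x<0$, I use $E[\delta(u)]=0$ to write
\begin{equation*}
E[\mathbf{1}_{\{F>x\}}\delta(u)]=-E[\mathbf{1}_{\{F\leq x\}}\delta(u)],
\end{equation*}
and apply Hölder to the right-hand side. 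Since $x<0$, the event $\{F\leq x\}$ is contained in $\{|F|\geq |x|\}$, which (up to the measure-zero discrepancy at $|F|=|x|$ that is absorbed by the inequality as stated) gives $P(F\leq x)^{1/p}\leq P(|F|>|x|)^{1/p}$, concluding the proof.

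There is essentially no obstacle here beyond the bookkeeping around the sign of $x$; the divergence-zero identity $E[\delta(u)]=0$ is precisely what is needed to symmetrize the level set around $0$, and everything else is a routine application of Hölder's inequality combined with the continuity of $\delta$ stated in Lemma~\ref{delta norm}.
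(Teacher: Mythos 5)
Your proof is correct and is precisely the argument the paper intends: the paper gives no proof beyond the remark that the lemma ``follows from $E\delta(u)=0$ and the H\"older inequality,'' and your case split on the sign of $x$ (using the duality relation with $F\equiv 1$ to get $E\delta(u)=0$, then H\"older on the complementary event when $x<0$) is the natural way to fill that in. The one caveat is the strict inequality on the right-hand side: for $x<0$ your inclusion only gives $\{F\le x\}\subseteq\{|F|\ge |x|\}$, so the bound as literally stated needs $P(F=x)=0$; this is an imprecision in the lemma's formulation rather than a flaw in your argument, and it is harmless here because every $F$ to which the lemma is applied in the paper has a density.
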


\setcounter{equation}{0}

\section{Moment estimates}

In this section, we derive moment estimates for the derivatives of $\xi _{t}$
and the conditional transition function $p^{W}\left( r,x;t,y\right) $.

Recall that $\xi _{t}=\xi _{t}^{r,x}$ with initial value $\xi _{r}=x$ is
given by
\begin{equation}
\xi _{t}=x+B_{r}^{t}+I_{r}^{t}\left( h\right) \,,\ \ 0\leq r<t\leq T\,,
\label{xi0}
\end{equation}%
\ \ where we introduced the notations
\begin{equation}
B_{r}^{t}\equiv B_{t}-B_{r},\text{ and }I_{r}^{t}\left( h\right) \equiv
\int_{r}^{t}\int_{\mathbb{R}}h\left( y-\xi _{u}\right) W\left( du,dy\right) .
\label{notion1}
\end{equation}%
Since $h\in H_{2}^{2}(\mathbb{R})$, by using the standard Picard iteration
scheme, we can prove that such a solution $\xi _{t}$ to the stochastic
differential equation (\ref{xi0}) exists, and by a regularization argument
of $h$ we can prove that $\xi _{t}\in \mathbb{D}^{2,2}$ (here the Malliavin
derivative is with respect to $B$). Taking the Malliavin derivative $%
D_{\theta }$ with respect to $B$, we have%
\begin{equation}
D_{\theta }\xi _{t}=\mathbf{1}_{\left[ r,t\right] }\left( \theta \right) %
\left[ 1-\int_{\theta }^{t}\int_{\mathbb{R}}h^{\prime }\left( y-\xi
_{u}\right) D_{\theta }\xi _{u}W\left( du,dy\right) \right] \,.
\label{Drvt1}
\end{equation}%
Note that
\begin{equation*}
M_{\theta ,t}:=\int_{\theta }^{t}\int_{\mathbb{R}}h^{\prime }\left( y-\xi
_{u}\right) W\left( du,dy\right)
\end{equation*}%
is a martingale with quadratic variation $\langle M\rangle _{\theta
,t}=\left\Vert h^{\prime }\right\Vert ^{2}\left( t-\theta \right) $ for $%
t>\theta $. Thus
\begin{equation}
D_{\theta }\xi _{t}=\mathbf{1}_{\left[ r,t\right] }\left( \theta \right)
\exp \left( M_{\theta ,t}-\frac{1}{2}\left\Vert h^{\prime }\right\Vert
^{2}\left( t-\theta \right) \right) .  \label{D1}
\end{equation}%
As a result, we have
\begin{equation}
D_{\eta }D_{\theta }\xi _{t}=\mathbf{1}_{\left[ r,t\right] }\left( \theta
\right) \exp \left( M_{\theta ,t}-\frac{1}{2}\left\Vert h^{\prime
}\right\Vert ^{2}\left( t-\theta \right) \right) D_{\eta }M_{\theta
,t}=D_{\theta }\xi _{t}\cdot D_{\eta }M_{\theta ,t},  \label{D2}
\end{equation}%
where $D_{\eta }M_{\theta ,t}=\mathbf{1}_{\left[ \theta ,t\right] }\left(
\eta \right) \int_{\eta }^{t}\int_{\mathbb{R}}h^{\prime \prime }\left( y-\xi
_{u}\right) D_{\eta }\xi _{u}W\left( du,dy\right) $.

The next lemma gives estimates for the moments of $D\xi _{t}$ and $D^{2}\xi
_{t}.$

\begin{lemma}
\label{Dmoments}For any $0\leq r<t\leq T$ and $p\geq 1$, we have
\begin{equation}
\left\Vert \left\Vert D\xi _{t}\right\Vert _{H}\right\Vert _{2p}\leq \exp
\left( \left( 2p-1\right) \left\Vert h^{\prime }\right\Vert ^{2}\left(
t-r\right) \right) \left( t-r\right) ^{\frac{1}{2}},  \label{D1p}
\end{equation}%
\begin{equation}
\left\Vert \left\Vert D^{2}\xi _{t}\right\Vert _{H\otimes H}\right\Vert
_{2p}\leq C_{p}\left\Vert h^{\prime \prime }\right\Vert \exp \left( \left(
4p-1\right) \left\Vert h^{\prime }\right\Vert ^{2}\left( t-r\right) \right)
\left( t-r\right) ^{\frac{3}{2}},  \label{D2p}
\end{equation}%
and for any $\gamma >0,$%
\begin{equation}
E(\left\Vert D\xi _{t}\right\Vert _{H}^{-2\gamma })\leq \exp \left( \left(
2\gamma ^{2}+\gamma \right) \left\Vert h^{\prime }\right\Vert ^{2}\left(
t-r\right) \right) \left( t-r\right) ^{-\gamma }.  \label{D1-}
\end{equation}
\end{lemma}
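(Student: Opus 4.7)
The plan is to reduce each of the three estimates to the explicit Doléans--Dade representation (\ref{D1}) of $D_\theta\xi_t$ together with the observation that $M_{\theta,\cdot}$ is a continuous martingale whose quadratic variation $\|h'\|^2(\cdot-\theta)$ is \emph{deterministic}. In particular, $\exp\bigl(cM_{\theta,\cdot}-\tfrac{c^2}{2}\|h'\|^2(\cdot-\theta)\bigr)$ is automatically a true martingale of mean one for every $c\in\mathbb{R}$, and this is what will let us evaluate all the relevant moments of $D_\theta\xi_t$ in closed form.

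For (\ref{D1p}) I would start from the identity
\[
\|D\xi_t\|_H^2=\int_r^t(D_\theta\xi_t)^2 d\theta=\int_r^t\exp\bigl(2M_{\theta,t}-\|h'\|^2(t-\theta)\bigr)\,d\theta
\]
and apply Jensen's inequality for the convex function $x\mapsto x^p$ against the uniform measure on $[r,t]$ in order to pull the $p$-th power inside the integral. This reduces matters to bounding $E[(D_\theta\xi_t)^{2p}]$, which I would evaluate by writing $(D_\theta\xi_t)^{2p}$ as the exponential martingale associated with $2p\,M_{\theta,\cdot}$ times the deterministic factor $\exp(p(2p-1)\|h'\|^2(t-\theta))$. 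The martingale factor has mean one, and taking the $2p$-th root then yields the claimed bound.

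For (\ref{D2p}) I would exploit the factorization $D_\eta D_\theta\xi_t=D_\theta\xi_t\cdot D_\eta M_{\theta,t}$ from (\ref{D2}). For fixed $\theta\le\eta$ the process $s\mapsto D_\eta M_{\theta,s}$ is a martingale on $[\eta,T]$ with quadratic variation $\|h''\|^2\int_\eta^{s}(D_\eta\xi_u)^2 du$, so Burkholder--Davis--Gundy, Jensen, and the first-step bound applied to $E[(D_\eta\xi_u)^{4p}]$ combine to give
\[
E|D_\eta M_{\theta,t}|^{4p}\le C_p\|h''\|^{4p}(t-\eta)^{2p}\exp\bigl(2p(4p-1)\|h'\|^2(t-\eta)\bigr).
\]
A Cauchy--Schwarz on $E[(D_\theta\xi_t)^{2p}(D_\eta M_{\theta,t})^{2p}]$ combined with one more Jensen step on the double integral $\int_r^t\int_r^t(D_\eta D_\theta\xi_t)^2 d\eta\,d\theta$ then assembles (\ref{D2p}): the two half-powers of $(t-r)$ from the $\theta,\eta$-integrations plus one from the BDG step produce the prefactor $(t-r)^{3/2}$, and the two exponential contributions combine to $\exp((4p-1)\|h'\|^2(t-r))$ after the final $2p$-th root.

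For (\ref{D1-}) I would again use $\|D\xi_t\|_H^2=\int_r^t(D_\theta\xi_t)^2 d\theta$, but now apply Jensen in the reverse direction via the convex function $x\mapsto x^{-\gamma}$ on $(0,\infty)$, obtaining
\[
\|D\xi_t\|_H^{-2\gamma}\le(t-r)^{-\gamma-1}\int_r^t(D_\theta\xi_t)^{-2\gamma}d\theta.
\]
Taking expectation and using the same exponential-martingale decomposition as in the first step, now applied to $-2\gamma M_{\theta,\cdot}$, yields $E[(D_\theta\xi_t)^{-2\gamma}]=\exp((2\gamma^2+\gamma)\|h'\|^2(t-\theta))$, from which (\ref{D1-}) follows by integrating in $\theta$. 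The main technical obstacle lies in the second estimate: the BDG step couples the two Malliavin time variables, so one has to iterate the first-step moment bound inside the $u$-integral and track all the exponential constants carefully to recover exactly the prefactor $\exp((4p-1)\|h'\|^2(t-r))$ in the statement rather than a strictly larger one.
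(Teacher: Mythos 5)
Your proposal is correct and follows essentially the same route as the paper: exact exponential-martingale moments of $D_\theta\xi_t$ from the explicit formula, the factorization $D_\eta D_\theta\xi_t=D_\theta\xi_t\cdot D_\eta M_{\theta,t}$ with Burkholder--Davis--Gundy and Cauchy--Schwarz for the second derivative, and Jensen with the uniform measure for the negative moments. The only (immaterial) difference is that you interchange powers and integrals via Jensen where the paper uses Minkowski's integral inequality; both yield the stated rates.
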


\begin{proof}
Note that for any $p\geq 1$ and $r\leq \theta <t$,
\begin{eqnarray}
\left\Vert D_{\theta }\xi _{t}\right\Vert _{2p}^{2} &=&\left( E\exp \left[
2p\left( M_{\theta ,t}-\frac{1}{2}\left\Vert h^{\prime }\right\Vert
^{2}\left( t-\theta \right) \right) \right] \right) ^{\frac{1}{p}}  \notag \\
&=&\exp \left( \left( 2p-1\right) \left\Vert h^{\prime }\right\Vert
^{2}\left( t-\theta \right) \right) .  \label{D1pE}
\end{eqnarray}%
Then (\ref{D1p}) follows from Minkowski's inequality and (\ref{D1pE}) since%
\begin{equation*}
\left\Vert \left\Vert D\xi _{t}\right\Vert _{H}\right\Vert _{2p}^{2}=\left[
E\left( \int_{r}^{t}\left\vert D_{\theta }\xi _{t}\right\vert ^{2}d\theta
\right) ^{p}\right] ^{\frac{1}{p}}\leq \int_{r}^{t}\left\Vert D_{\theta }\xi
_{t}\right\Vert _{2p}^{2}d\theta .
\end{equation*}%
Applying the Burkholder-Davis-Gundy inequality we have for any $r\leq \theta
\leq \eta <t$%
\begin{eqnarray}
\left\Vert D_{\eta }M_{\theta ,t}\right\Vert _{2p}^{2} &\leq &C_{p}\left(
E\left\vert \int_{\eta }^{t}\int_{\mathbb{R}}\left\vert h^{\prime \prime
}\left( y-\xi _{u}\right) D_{\eta }\xi _{u}\right\vert ^{2}dydu\right\vert
^{p}\right) ^{\frac{1}{p}}  \notag \\
&\leq &C_{p}\left\Vert h^{\prime \prime }\right\Vert ^{2}\int_{\eta
}^{t}\left\Vert D_{\eta }\xi _{u}\right\Vert _{2p}^{2}du.  \label{DMp}
\end{eqnarray}%
Combining (\ref{D2}), (\ref{D1pE}) and (\ref{DMp}) yields for any $r\leq
\theta \leq \eta <t$
\begin{eqnarray}
\left\Vert D_{\eta }D_{\theta }\xi _{t}\right\Vert _{2p}^{2} &=&\left\Vert
D_{\theta }\xi _{t}D_{\eta }M_{\theta ,t}\right\Vert _{2p}^{2}\leq
\left\Vert D_{\theta }\xi _{t}\right\Vert _{4p}^{2}\left\Vert D_{\eta
}M_{\theta ,t}\right\Vert _{4p}^{2}  \notag \\
&\leq &C_{p}\left\Vert h^{\prime \prime }\right\Vert ^{2}\exp \left( 2\left(
4p-1\right) \left\Vert h^{\prime }\right\Vert ^{2}\left( t-\theta \right)
\right) \left( t-\eta \right) .  \label{D2pE}
\end{eqnarray}%
An application of Minkowski's inequality implies that
\begin{equation*}
\left\Vert \left\Vert D^{2}\xi _{t}\right\Vert _{H\otimes H}\right\Vert
_{2p}^{2}\leq \int_{r}^{t}\int_{r}^{t}\left\Vert D_{\eta }D_{\theta }\xi
_{t}\right\Vert _{2p}^{2}d\theta d\eta .
\end{equation*}%
This yields (\ref{D2p}).

For the negative moments of $\left\Vert D\xi _{t}\right\Vert _{H},$ by
Jensen's inequality we have%
\begin{equation*}
E\left( \left\Vert D\xi _{t}\right\Vert _{H}^{-2\gamma }\right) =E\left(
\int_{r}^{t}\left\vert D_{\theta }\xi _{t}\right\vert ^{2}d\theta \right)
^{-\gamma }\leq \left( t-r\right) ^{-\gamma -1}\int_{r}^{t}E\left\vert
D_{\theta }\xi _{t}\right\vert ^{-2\gamma }d\theta .
\end{equation*}%
Then, $\left( \ref{D1-}\right) $ follows immediately.
\end{proof}

The moment estimates of the Malliavin derivatives of the difference $\xi
_{t}-\xi _{s}$ can also be obtained in a similar way. The next lemma gives
these estimates.

\begin{lemma}
\label{D12diff}For $0\leq s<t\leq T$ and $p\geq 1,$ we have%
\begin{equation}
\left\Vert \left\Vert D\left( \xi _{t}-\xi _{s}\right) \right\Vert
_{H}\right\Vert _{2p}<C\left( t-s\right) ^{\frac{1}{2}},  \label{D1diff}
\end{equation}%
and%
\begin{equation}
\left\Vert \left\Vert D^{2}\left( \xi _{t}-\xi _{s}\right) \right\Vert
_{H\otimes H}\right\Vert _{2p}<C\left( t-s\right) ^{\frac{3}{2}}.
\label{D2diff}
\end{equation}
\end{lemma}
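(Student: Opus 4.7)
The plan is to mimic the strategy of Lemma \ref{Dmoments}. I would first derive explicit stochastic-integral representations of $D(\xi_t-\xi_s)$ and $D^2(\xi_t-\xi_s)$ by taking Malliavin derivatives of the identity $\xi_t-\xi_s = (B_t-B_s) + \int_s^t\int_{\mathbb{R}} h(y-\xi_u)\,W(du,dy)$, then estimate them pointwise in the Malliavin parameters via the Burkholder-Davis-Gundy (BDG) inequality together with the moment bounds (\ref{D1pE}) and (\ref{D2pE}) already at hand, and finish by integrating in the parameters and invoking Minkowski's inequality.

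Concretely, for the first derivative,
\[
D_\theta(\xi_t-\xi_s) = \mathbf{1}_{[s,t]}(\theta) - \int_{s\vee\theta}^t\int_{\mathbb{R}} h'(y-\xi_u)\,D_\theta\xi_u\,W(du,dy).
\]
Splitting the range of $\theta$ into $[0,s]$ and $[s,t]$, BDG combined with (\ref{D1pE}) yields the uniform bound $\|D_\theta(\xi_t-\xi_s)\|_{2p}^2\le C(t-s)$ on the first sub-region (the indicator is zero and the It\^o integral runs over an interval of length $t-s$) and a bound of order one on the second. Integrating in $\theta$ and absorbing the factor of $s$ into the $T$-dependent constant $C$ produces (\ref{D1diff}). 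For the second derivative, applying $D_\eta$ and using that $W$ is independent of $B$ so that $D_\eta$ commutes with the $W$-integral gives, on the region $\theta,\eta\le s$,
\[
D_\eta D_\theta(\xi_t-\xi_s) = \int_s^t\int_{\mathbb{R}} \bigl[h''(y-\xi_u)D_\eta\xi_u D_\theta\xi_u - h'(y-\xi_u)D_\eta D_\theta\xi_u\bigr]\,W(du,dy),
\]
with analogous expressions on the remaining sub-regions (the lower limit becoming $\max(s,\theta,\eta)$). I would partition $(\theta,\eta)\in[0,T]^2$ according to the positions of $\theta,\eta$ relative to $s$ and $t$ and estimate each piece by BDG together with (\ref{D1pE}), (\ref{D2pE}) and the bound $\|D_\eta D_\theta\xi_u\|_{4p}^2\le C(u-\max(\theta,\eta))$.

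The hard part will be to extract the correct exponent on the region $\theta,\eta\le s$, where a direct BDG on the displayed integral is too weak by a factor of $(t-s)^2$ once one integrates over $[0,s]^2$. To overcome this obstruction I would rewrite $D_\eta D_\theta(\xi_t-\xi_s)=F_t-F_s$ with $F_u:=D_\theta\xi_u\cdot D_\eta M_{\theta,u}$ (using (\ref{D2})) and apply It\^o's product rule: the two martingale parts of $dF_u$ are stochastic integrals over $[s,t]$ carrying an extra prefactor $D_\eta M_{\theta,s}$ of $L^{4p}$-size $(s-\eta)^{1/2}$, while the cross-variation part is a Lebesgue integral of length $t-s$. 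These refined estimates, combined via H\"older's inequality with (\ref{D1pE})--(\ref{D2pE}) and BDG on each sub-region, and finally assembled through Minkowski's inequality $\|\|D^2(\xi_t-\xi_s)\|_{H\otimes H}\|_{2p}^2 \le \iint \|D_\eta D_\theta(\xi_t-\xi_s)\|_{2p}^2\,d\theta\,d\eta$, will then yield (\ref{D2diff}).
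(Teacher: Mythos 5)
Your treatment of (\ref{D1diff}) is correct and is essentially the paper's own argument: differentiate the equation for $\xi_t-\xi_s$, bound the resulting stochastic integral via Burkholder--Davis--Gundy and (\ref{D1pE}), and integrate in $\theta$ (you are in fact more careful than the paper, whose notation $I_\theta^t$ silently suppresses the region $\theta\le s$).

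The second half of your proposal has a genuine gap, located exactly at the step you flag as the hard part. In the It\^{o} product decomposition of $F_u=D_\theta\xi_u\,D_\eta M_{\theta,u}$ on $[s,t]$, one of the two martingale terms is $\int_s^t\int_{\mathbb{R}}h''(y-\xi_u)D_\theta\xi_u D_\eta\xi_u\,W(du,dy)$: its prefactor is $D_\theta\xi_u$, an exponential martingale of size one, not $D_\eta M_{\theta,s}$; the other term does carry $D_\eta M_{\theta,u}$, but that factor has $L^{4p}$-size $(u-\eta)^{1/2}\le T^{1/2}$, which is not small in $t-s$. So neither term improves on the $(t-s)^{1/2}$ that direct BDG already gives, and the claimed extra smallness is illusory. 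In fact no argument can close this gap, because (\ref{D2diff}) with exponent $3/2$ fails whenever $h''\neq 0$: for $\theta\le\eta\le s$ the quantity $D^2_{\eta,\theta}(\xi_t-\xi_s)$ is a martingale increment over $[s,t]$ whose expected quadratic variation is bounded below by $\Vert h''\Vert^2\int_s^t E[(D_\theta\xi_u D_\eta\xi_u)^2]\,du\ge \Vert h''\Vert^2(t-s)$ (the cross term vanishes since $\int h'h''=0$), so that $\left\Vert \left\Vert D^2(\xi_t-\xi_s)\right\Vert_{H\otimes H}\right\Vert_{2}\ge \Vert h''\Vert\,(s-r)(t-s)^{1/2}$, which exceeds $C(t-s)^{3/2}$ as $t\downarrow s$ with $s>r$ fixed. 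The bound your direct computation produces, $C(t-s)^{1/2}$, is therefore the correct and sharp order; it is also all that is ever used, since in the proof of Lemma \ref{delta timediff} the quantity $\Vert Y_2\Vert_{4p}$ only needs to be $O((t-s)^{1/2})$ to yield (\ref{tdA1}) and (\ref{tdA3}). The exponent $3/2$ in (\ref{D2diff}) appears to be a slip patterned on (\ref{D2p}), and the paper's one-line justification (``in a similar way'') does not establish it either; you should prove and use the $(t-s)^{1/2}$ version rather than chase the stated exponent.
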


\begin{proof}
Similar to (\ref{Drvt1}), we have
\begin{eqnarray}
D_{\theta }\xi _{t} &=&D_{\theta }\xi _{s}+\mathbf{1}_{[s,t]}(\theta
)-\int_{\theta \vee s}^{t}\int_{\mathbb{R}}h^{\prime }\left( y-\xi
_{u}\right) D_{\theta }\xi _{u}W\left( du,dy\right)  \notag \\
&=&D_{\theta }\xi _{s}+\mathbf{1}_{[s,t]}(\theta )-I_{\theta }^{t}\left(
h^{\prime }D_{\theta }\xi _{.}\right) ,  \label{D1diff1}
\end{eqnarray}%
where henceforth for any process $Y=(Y_{t},0\leq t\leq T)$ and $f\in L^{2}(%
\mathbb{R})$, we denote
\begin{equation*}
I_{\theta }^{t}\left( fY_{\cdot }\right) =\mathbf{1}_{[s,t]}(\theta
)\int_{\theta }^{t}\int_{\mathbb{R}}f\left( y-\xi _{u}\right) Y_{u}W\left(
du,dy\right) .
\end{equation*}%
Applying the Burkholder-Davis-Gundy inequality with (\ref{D1pE}), we obtain
for $s\leq \theta \leq t$%
\begin{eqnarray}
&&\left\Vert I_{\theta }^{t}\left( h^{\prime }D_{\theta }\xi _{.}\right)
\right\Vert _{2p}^{2}\leq \left( E\left\vert \int_{\theta }^{t}\int_{\mathbb{%
R}}\left\vert h^{\prime }\left( y-\xi _{u}\right) D_{\theta }\xi
_{u}\right\vert ^{2}dudy\right\vert ^{p}\right) ^{\frac{1}{p}}  \notag \\
&\leq &\left\Vert h^{\prime }\right\Vert ^{2}\exp \left( \left( 2p-1\right)
\left\Vert h^{\prime }\right\Vert ^{2}\left( t-\theta \right) \right) \left(
t-\theta \right) .  \label{D1diff2}
\end{eqnarray}%
Then (\ref{D1diff}) follows from (\ref{D1diff1}) and (\ref{D1diff2}) since
\begin{eqnarray*}
&&\left( E\left\Vert D\xi _{t}-D\xi _{s}\right\Vert _{H}^{2p}\right) ^{\frac{%
1}{p}}=\left[ E\left( \int_{0}^{T}\left\vert \mathbf{1}_{[s,t]}(\theta
)+I_{\theta }^{t}\left( h^{\prime }D_{\theta }\xi _{.}\right) \right\vert
^{2}d\theta \right) ^{p}\right] ^{\frac{1}{p}} \\
&\leq &\int_{0}^{T}\left( E\left\vert \mathbf{1}_{[s,t]}(\theta )+I_{\theta
}^{t}\left( h^{\prime }D_{\theta }\xi _{.}\right) \right\vert ^{2p}\right) ^{%
\frac{1}{p}}d\theta \\
&\leq &2\left( t-s\right) +2\int_{s}^{t}\left( E\left\vert I_{\theta
}^{t}\left( h^{\prime }D_{\theta }\xi _{.}\right) \right\vert ^{2p}\right) ^{%
\frac{1}{p}}d\theta \\
&\leq &2\left( 1+\left\Vert h^{\prime }\right\Vert ^{2}\exp \left( \left(
2p-1\right) \left\Vert h^{\prime }\right\Vert ^{2}\left( t-s\right) \right)
\right) \left( t-s\right) .
\end{eqnarray*}

For moments of $D^{2}\left( \xi _{t}-\xi _{s}\right) $, from (\ref{D1diff1})
we have%
\begin{equation}
D_{\eta ,\theta }^{2}\left( \xi _{t}-\xi _{s}\right) =-D_{\eta }I_{\theta
}^{t}\left( h^{\prime }D_{\theta }\xi _{.}\right) =I_{\eta }^{t}\left(
h^{\prime \prime }D_{\theta }\xi .D_{\eta }\xi .\right) -I_{\eta }^{t}\left(
h^{\prime }D_{\eta ,\theta }^{2}\xi .\right) .  \label{D2diff1}
\end{equation}%
In a similar way as above we can get (\ref{D2diff}).
\end{proof}

Next we derive some estimates for the density $p^{W}\left( r,x;t,y\right) $
of the conditional transition probability defined in (\ref{p def}). Denote
\begin{equation}
u_{t}\equiv \frac{D\xi _{t}}{\left\Vert D\xi _{t}\right\Vert _{H}^{2}}.
\label{notion2}
\end{equation}%
The next two lemmas give estimates of the divergence of $u_{t}$ and $%
u_{t}-u_{s}$, which are important to derive the moment estimates of $%
p^{W}\left( r,x;t,y\right) $.

\begin{lemma}
\label{Edelta u}For any $p\geq 1$ and $0\leq r<t\leq T$, we have%
\begin{equation}
\left\Vert \delta \left( u_{t}\right) \right\Vert _{p}\leq C\left(
t-r\right) ^{-\frac{1}{2}}.  \label{E delta u}
\end{equation}
\end{lemma}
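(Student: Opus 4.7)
The plan is to apply Lemma \ref{delta norm} to the $H$-valued process $u_t = D\xi_t / \|D\xi_t\|_H^2$. This reduces the task to bounding the two terms $\|E u_t\|_H$ and $\|Du_t\|_{L^p(\Omega, H\otimes H)}$ by $C(t-r)^{-1/2}$, each of which I can control by combining the moment estimates already proved in Lemma \ref{Dmoments}.

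First I would bound $\|E u_t\|_H$. By Jensen's inequality for Bochner integrals, $\|E u_t\|_H \leq E\|u_t\|_H = E\,\|D\xi_t\|_H^{-1}$. Applying the negative-moment estimate \eqref{D1-} with $\gamma = 1/2$ immediately gives $E\,\|D\xi_t\|_H^{-1} \leq C(t-r)^{-1/2}$, which is the desired rate.

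Next I would compute $D_\eta u_t$ by the quotient rule in Malliavin calculus:
\begin{equation*}
D_\eta u_t \;=\; \frac{D_\eta D\xi_t}{\|D\xi_t\|_H^2} \;-\; 2\,\frac{\langle D\xi_t, D_\eta D\xi_t\rangle_H}{\|D\xi_t\|_H^4}\,D\xi_t.
\end{equation*}
Cauchy--Schwarz in $H$ on the inner product gives the pointwise bound $\|D_\eta u_t\|_H \leq 3\,\|D_\eta D\xi_t\|_H / \|D\xi_t\|_H^2$, so that $\|Du_t\|_{H\otimes H} \leq 3\,\|D^2\xi_t\|_{H\otimes H} / \|D\xi_t\|_H^2$. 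Then I would split the $L^p(\Omega)$ norm using Cauchy--Schwarz:
\begin{equation*}
\|Du_t\|_{L^p(\Omega, H\otimes H)} \;\leq\; 3\,\bigl\|\|D^2\xi_t\|_{H\otimes H}\bigr\|_{2p}\cdot\bigl\|\|D\xi_t\|_H^{-2}\bigr\|_{2p}.
\end{equation*}
The second-derivative estimate \eqref{D2p} controls the first factor by $C(t-r)^{3/2}$, while \eqref{D1-} with $\gamma = 2p$ controls $\bigl\|\|D\xi_t\|_H^{-2}\bigr\|_{2p} = \bigl(E\|D\xi_t\|_H^{-4p}\bigr)^{1/(2p)}$ by $C(t-r)^{-1}$. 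Multiplying gives $\|Du_t\|_{L^p(\Omega, H\otimes H)} \leq C(t-r)^{1/2}$.

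Finally, combining the two bounds via Lemma \ref{delta norm},
\begin{equation*}
\|\delta(u_t)\|_p \;\leq\; C_p\bigl(\|Eu_t\|_H + \|Du_t\|_{L^p(\Omega,H\otimes H)}\bigr) \;\leq\; C\bigl((t-r)^{-1/2} + (t-r)^{1/2}\bigr) \;\leq\; C(t-r)^{-1/2},
\end{equation*}
where in the last inequality I use $t-r \leq T$ to absorb the subdominant term. I do not expect a real obstacle here: all the needed ingredients are already packaged in Lemma \ref{Dmoments}; the only care required is to track the exponents carefully when applying the $L^p$ Cauchy--Schwarz split, and to verify that the constants remain uniformly bounded on $[0,T]$ through the exponential factors $\exp(c\|h'\|^2(t-r))$, which is automatic since $t-r \leq T$.
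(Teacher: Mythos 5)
Your proposal is correct and follows essentially the same route as the paper: apply Lemma \ref{delta norm} (conditionally on $W$, which is the one small point you gloss over by writing $\|Eu_t\|_H$ rather than $\|E^B u_t\|_H$, but this changes nothing since both are dominated by $\bigl\|\|u_t\|_H\bigr\|_p$), derive the pointwise bound $\|Du_t\|_{H\otimes H}\leq 3\|D^2\xi_t\|_{H\otimes H}/\|D\xi_t\|_H^2$, and then combine \eqref{D2p} and \eqref{D1-} via a H\"older split. Your choice of exponents $(2p,2p)$ in place of the paper's generic conjugate pair $\frac{1}{\alpha}+\frac{1}{\beta}=\frac{1}{p}$ is immaterial, and the exponent bookkeeping ($(t-r)^{3/2}(t-r)^{-1}=(t-r)^{1/2}$, absorbed by $(t-r)^{-1/2}$ on $[0,T]$) matches the paper exactly.
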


\begin{proof}
Using the estimate $\left( \ref{delta}\right) $ we obtain
\begin{eqnarray*}
\left\Vert \delta \left( u_{t}\right) \right\Vert _{p} &=&\left( E\left\vert
\delta \left( u_{t}\right) \right\vert ^{p}\right) ^{\frac{1}{p}}\leq \left[
E\left( E^{B}\left\vert \delta \left( u_{t}\right) \right\vert ^{p}\right) %
\right] ^{\frac{1}{p}} \\
&\leq &C_{p}\left( E\left[ \left\Vert E^{B}u_{t}\right\Vert _{H}^{p}+\left(
E^{B}\left\Vert Du_{t}\right\Vert _{H\otimes H}^{p}\right) \right] \right) ^{%
\frac{1}{p}} \\
&\leq &C_{p}\left( \left\Vert \left\Vert u_{t}\right\Vert _{H}\right\Vert
_{p}+\left\Vert \left\Vert Du_{t}\right\Vert _{H\otimes H}\right\Vert
_{p}\right) .
\end{eqnarray*}%
We have
\begin{equation*}
Du_{t}=\frac{D^{2}\xi _{t}}{\left\Vert D\xi _{t}\right\Vert _{H}^{2}}-2\frac{%
\left\langle D^{2}\xi _{t},D\xi _{t}\otimes D\xi _{t}\right\rangle
_{H\otimes H}}{\left\Vert D\xi _{t}\right\Vert _{H}^{4}},
\end{equation*}%
and consequently $\left\Vert Du_{t}\right\Vert _{H\otimes H}\leq \frac{%
3\left\Vert D^{2}\xi _{t}\right\Vert _{H\otimes H}}{\left\Vert D\xi
_{t}\right\Vert _{H}^{2}}$. Hence, for any positive number $\alpha ,\beta >1$
such that $\frac{1}{a}+\frac{1}{\beta }=\frac{1}{p}$, applying (\ref{D2p})
and (\ref{D1-}) we obtain (\ref{E delta u}):%
\begin{eqnarray*}
\left\Vert \delta \left( u_{t}\right) \right\Vert _{p} &\leq &C_{p}\left(
\left\Vert \left\Vert D\xi _{t}\right\Vert _{H}^{-1}\right\Vert
_{p}+3\left\Vert D^{2}\xi _{t}\right\Vert _{L^{\alpha }\left( \Omega
,H\otimes H\right) }\left\Vert \left\Vert D\xi _{t}\right\Vert
_{H}^{-2}\right\Vert _{\beta }\right) \\
&\leq &C\left( p,\left\Vert h^{\prime }\right\Vert ,\left\Vert h^{\prime
\prime }\right\Vert ,T\right) \left( (t-r)^{-\frac{1}{2}}+(t-r)^{\frac{3}{2}%
}(t-r)^{-1}\right) .
\end{eqnarray*}%
This proves the lemma.
\end{proof}

\begin{lemma}
\label{delta timediff}For $p\geq 1$, and $0\leq r<s<t\leq T,$
\begin{equation}
\left\Vert \delta \left( u_{t}-u_{s}\right) \right\Vert _{2p}\leq C\left(
t-s\right) ^{\frac{1}{2}}\left( s-r\right) ^{-\frac{1}{2}}\left( t-r\right)
^{-\frac{1}{2}}.  \label{tdelta}
\end{equation}
\end{lemma}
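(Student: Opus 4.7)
The plan is to mirror the proof of Lemma~\ref{Edelta u}. Conditioning on $W$ and applying Lemma~\ref{delta norm} on the underlying Wiener space of $B$ yields
\begin{equation*}
\left\Vert \delta(u_t-u_s)\right\Vert_{2p} \leq C_p\left( \left\Vert \left\Vert u_t-u_s\right\Vert_H\right\Vert_{2p} + \left\Vert \left\Vert D(u_t-u_s)\right\Vert_{H\otimes H}\right\Vert_{2p} \right),
\end{equation*}
so it suffices to bound each of the two terms on the right by $C(t-s)^{1/2}(s-r)^{-1/2}(t-r)^{-1/2}$.

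For the first term, the key is the algebraic identity
\begin{equation*}
u_t-u_s = \frac{D\xi_t - D\xi_s}{\left\Vert D\xi_t\right\Vert_H^2} - \frac{D\xi_s \, \left\langle D\xi_t-D\xi_s, D\xi_t+D\xi_s\right\rangle_H}{\left\Vert D\xi_t\right\Vert_H^2 \left\Vert D\xi_s\right\Vert_H^2},
\end{equation*}
obtained by applying $1/a - 1/b = (b-a)/(ab)$ to $a = \left\Vert D\xi_t\right\Vert_H^2$ and $b = \left\Vert D\xi_s\right\Vert_H^2$ and then writing $\left\Vert D\xi_t\right\Vert_H^2 - \left\Vert D\xi_s\right\Vert_H^2 = \left\langle D\xi_t - D\xi_s, D\xi_t + D\xi_s\right\rangle_H$. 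Each summand now carries a derivative-difference factor; taking $H$-norms, applying Cauchy--Schwarz and then H\"older's inequality with appropriate conjugate exponents, and using $\left\Vert \left\Vert D(\xi_t-\xi_s)\right\Vert_H\right\Vert_q \leq C(t-s)^{1/2}$ from Lemma~\ref{D12diff} together with $\left\Vert \left\Vert D\xi_\tau\right\Vert_H\right\Vert_q \leq C(\tau-r)^{1/2}$ and $E\left\Vert D\xi_\tau\right\Vert_H^{-2\gamma} \leq C(\tau-r)^{-\gamma}$ from Lemma~\ref{Dmoments} at $\tau = s$ and $\tau = t$ gives the target bound for this term.

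For the second term, I would use the formula for $Du_t$ derived in the proof of Lemma~\ref{Edelta u} and form $Du_t - Du_s$. Telescoping between the $t$- and $s$-versions of each building block ($D\xi$, $D^2\xi$, and $\left\Vert D\xi\right\Vert_H^2$) --- handling the last block once more via $\left\Vert D\xi_t\right\Vert_H^2 - \left\Vert D\xi_s\right\Vert_H^2 = \left\langle D\xi_t - D\xi_s, D\xi_t + D\xi_s\right\rangle_H$ --- decomposes the difference into finitely many products in which exactly one factor is a Malliavin-derivative difference. This difference factor produces $(t-s)^{1/2}$ via Lemma~\ref{D12diff}, while the remaining factors are controlled by the positive and negative moment bounds of Lemma~\ref{Dmoments}, yielding powers of $(s-r)$ and $(t-r)$.

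The main obstacle is the bookkeeping for $D(u_t-u_s)$: the expansion produces several summands with different clusters of $\left\Vert D\xi_t\right\Vert_H^{-k}$ and $\left\Vert D\xi_s\right\Vert_H^{-k}$ factors, and for each summand one must verify that the resulting $L^{2p}$ estimate is at most $C(t-s)^{1/2}(s-r)^{-1/2}(t-r)^{-1/2}$. This is possible because $s-r\leq t-r$ gives $(s-r)^{1/2}(t-r)^{-1/2} \leq 1$, so any surplus positive power of $(s-r)$ can be absorbed into $(t-r)^{-1/2}$, and any leftover positive powers of $(t-s)$, $(s-r)$, or $(t-r)$ are bounded by a power of $T$ and included in~$C$. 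Combining the two estimates gives \eref{tdelta}.
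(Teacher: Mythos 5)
Your proposal is correct and follows essentially the same route as the paper: the same resolvent identity $1/a-1/b=(b-a)/(ab)$ applied to $\Vert D\xi_t\Vert_H^2$ and $\Vert D\xi_s\Vert_H^2$, the conditional application of Lemma \ref{delta norm}, and the same combination of Lemmas \ref{Dmoments} and \ref{D12diff} with H\"older's inequality to absorb surplus powers via $s-r\leq t-r$. The only (immaterial) difference is that the paper further splits your first summand $\bigl(D\xi_t-D\xi_s\bigr)/\Vert D\xi_t\Vert_H^2$ into two pieces using the explicit formula \eref{D1diff1} for $D(\xi_t-\xi_s)$, yielding a three-term decomposition $A_1+A_2+A_3$ instead of your two, whereas you treat $D(\xi_t-\xi_s)$ as a single block controlled by Lemma \ref{D12diff}; both give the stated bound.
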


\begin{proof}
Using (\ref{D1diff1}) we can write
\begin{equation*}
u_{t}-u_{s}=\frac{D\xi _{t}}{\left\Vert D\xi _{t}\right\Vert _{H}^{2}}-\frac{%
D\xi _{s}}{\left\Vert D\xi _{s}\right\Vert _{H}^{2}}=A_{1}+A_{2}+A_{3},
\end{equation*}%
where
\begin{equation*}
A_{1}=D\xi _{s}\left( \frac{1}{\left\Vert D\xi _{s}\right\Vert _{H}^{2}}-%
\frac{1}{\left\Vert D\xi _{t}\right\Vert _{H}^{2}}\right) ,A_{2}=\frac{%
\mathbf{1}_{[s,t]}(\theta )}{\left\Vert D\xi _{t}\right\Vert _{H}^{2}},A_{3}=%
\frac{I_{\theta }^{t}\left( h^{\prime }D_{\theta }\xi _{.}\right) }{%
\left\Vert D\xi _{t}\right\Vert _{H}^{2}}.
\end{equation*}%
As a consequence, we have%
\begin{equation}
\left\Vert \delta \left( u_{t}-u_{s}\right) \right\Vert _{2p}\leq
\sum_{i=1}^{3}\left\Vert \delta A_{i}\right\Vert _{2p}.  \label{td}
\end{equation}%
For simplicity we introduce the following notation%
\begin{equation*}
V_{t}\equiv \left\Vert D\xi _{t}\right\Vert _{H}\text{, }N_{t}\equiv
\left\Vert D^{2}\xi _{t}\right\Vert _{H\otimes H}\text{, }Y_{i}=\left\Vert
D^{i}\left( \xi _{t}-\xi _{s}\right) \right\Vert _{H^{\otimes i}}\text{, }%
i=1,2.
\end{equation*}%
Note that
\begin{equation*}
\left\Vert A_{1}\right\Vert _{H}=\frac{\langle D\xi _{t}-D\xi _{s},D\xi
_{t}+D\xi _{s}\rangle }{\left\Vert D\xi _{s}\right\Vert _{H}\left\Vert D\xi
_{t}\right\Vert _{H}^{2}}\leq Y_{1}\left(
V_{t}^{-2}+V_{s}^{-1}V_{t}^{-1}\right) ,
\end{equation*}%
and%
\begin{eqnarray*}
&&\left\Vert DA_{1}\right\Vert _{H\otimes H}=\left\Vert D\left( \frac{D\xi
_{s}\langle D\xi _{t}-D\xi _{s},D\xi _{t}+D\xi _{s}\rangle }{\left\Vert D\xi
_{s}\right\Vert _{H}^{2}\left\Vert D\xi _{t}\right\Vert _{H}^{2}}\right)
\right\Vert _{H\otimes H} \\
&\leq &Y_{1}N_{s}\left( V_{s}^{-2}V_{t}^{-1}+V_{s}^{-1}V_{t}^{-2}\right)
+Y_{2}\left( V_{s}^{-1}V_{t}^{-1}+V_{t}^{-2}\right) \\
&&+Y_{1}\left( N_{t}+N_{s}\right) V_{s}^{-1}V_{t}^{-2} \\
&&+2Y_{1}\left[ N_{s}\left( V_{s}^{-2}V_{t}^{-1}+V_{s}^{-1}V_{t}^{-2}\right)
+N_{t}\left( V_{t}^{-3}+V_{s}^{-1}V_{t}^{-2}\right) \right] .
\end{eqnarray*}%
As a consequence, applying Lemma \ref{delta norm} and H\"{o}lder's
inequality we get
\begin{eqnarray*}
&&\left\Vert \delta \left( A_{1}\right) \right\Vert _{2p}\leq C\left(
\left\Vert \left\Vert A_{1}\right\Vert _{H}\right\Vert _{2p}+\left\Vert
\left\Vert DA_{1}\right\Vert _{H\otimes H}\right\Vert _{2p}\right) \\
&\leq &C\left\Vert Y_{1}\right\Vert _{4p}\left( \left\Vert
V_{t}^{-2}\right\Vert _{4p}+\left\Vert V_{t}^{-1}\right\Vert _{8p}\left\Vert
V_{s}^{-1}\right\Vert _{8p}\right) \\
&&+C\left\Vert Y_{1}\right\Vert _{8p}\left\Vert N_{s}\right\Vert _{8p}\left(
\left\Vert V_{t}^{-1}\right\Vert _{8p}\left\Vert V_{s}^{-2}\right\Vert
_{8p}+\left\Vert V_{s}^{-1}\right\Vert _{8p}\left\Vert V_{t}^{-2}\right\Vert
_{8p}\right) \\
&&+C\left\Vert Y_{2}\right\Vert _{4p}\left( \left\Vert V_{t}^{-1}\right\Vert
_{8p}\left\Vert V_{s}^{-1}\right\Vert _{8p}+\left\Vert V_{t}^{-2}\right\Vert
_{4p}\right) \\
&&+C\left\Vert Y_{1}\right\Vert _{8p}\left( \left\Vert N_{s}\right\Vert
_{8p}+\left\Vert N_{t}\right\Vert _{8p}\right) \left\Vert
V_{t}^{-2}\right\Vert _{8p}\left\Vert V_{s}^{-1}\right\Vert _{8p} \\
&&+2C\left\Vert Y_{1}\right\Vert _{8p}\left\Vert N_{s}\right\Vert
_{8p}\left( \left\Vert V_{t}^{-1}\right\Vert _{8p}\left\Vert
V_{s}^{-2}\right\Vert _{8p}+\left\Vert V_{t}^{-2}\right\Vert _{8p}\left\Vert
V_{s}^{-1}\right\Vert _{8p}\right) \\
&&+2C\left\Vert Y_{1}\right\Vert _{8p}\left\Vert N_{t}\right\Vert
_{8p}\left( \left\Vert V_{t}^{-3}\right\Vert _{4p}+\left\Vert
V_{t}^{-2}\right\Vert _{8p}\left\Vert V_{s}^{-1}\right\Vert _{8p}\right) .
\end{eqnarray*}%
From Lemma \ref{Dmoments} and Lemma \ref{D12diff} it follows that
\begin{equation}
\left\Vert \delta \left( A_{1}\right) \right\Vert _{2p}\leq C\left(
t-s\right) ^{\frac{1}{2}}\left( s-r\right) ^{-\frac{1}{2}}\left( t-r\right)
^{-\frac{1}{2}}.  \label{tdA1}
\end{equation}%
Note that $\left\Vert A_{2}\right\Vert _{H}=\left\Vert \frac{\mathbf{1}%
_{[s,t]}(\theta )}{\left\Vert D\xi _{t}\right\Vert _{H}^{2}}\right\Vert
_{H}=\left\Vert D\xi _{t}\right\Vert _{H}^{-2}\left( t-s\right) ^{\frac{1}{2}%
}$ and%
\begin{equation*}
\left\Vert DA_{2}\right\Vert _{H\otimes H}\leq 2\left\Vert D\xi
_{t}\right\Vert _{H}^{-3}\left\Vert D^{2}\xi _{t}\right\Vert _{H\otimes
H}\left( t-s\right) ^{\frac{1}{2}}.
\end{equation*}%
Then, by Lemma \ref{delta norm}, H\"{o}lder's inequality and Lemma \ref%
{Dmoments} we see that
\begin{eqnarray}
&&\left\Vert \delta \left( A_{2}\right) \right\Vert _{2p}\leq C\left(
\left\Vert \left\Vert A_{2}\right\Vert _{H}\right\Vert _{2p}+\left\Vert
DA_{2}\right\Vert _{2p}\right)  \notag \\
&\leq &C\left( t-s\right) ^{\frac{1}{2}}\left( \left\Vert
V_{t}^{-2}\right\Vert _{2p}+\left\Vert D^{2}\xi _{t}\right\Vert
_{4p}\left\Vert V_{t}^{-1}\right\Vert _{4p}\right)  \notag \\
&\leq &2C\left( t-s\right) ^{\frac{1}{2}}\left( \left( t-r\right)
^{-1}+1\right) .  \label{tdA2}
\end{eqnarray}

For the term $A_{3}$, we apply Minkowski's inequality and the
Burkholder-Davis-Gundy inequality and use (\ref{D1diff2}). Thus for any $%
p\geq 1,$
\begin{eqnarray}
&&\left\Vert \left\vert \left\vert I_{\theta }^{t}\left( h^{\prime
}D_{\theta }\xi _{.}\right) \right\vert \right\vert _{H}\right\Vert
_{2p}=\left( E\left\vert \int_{s}^{t}I_{\theta }^{t}(h^{\prime }D_{\theta
}\xi _{.})^{2}d\theta \right\vert ^{p}\right) ^{\frac{1}{2p}}  \notag \\
&\leq &C_{p}\left( \int_{s}^{t}\left\Vert I_{\theta }^{t}\left( h^{\prime
}D\xi .\right) \right\Vert _{2p}^{2}d\theta \right) ^{\frac{1}{2}}  \notag \\
&\leq &C_{p}\left\Vert h^{\prime }\right\Vert \exp \left( \left( 2p-1\right)
\left\Vert h^{\prime }\right\Vert ^{2}\left( t-r\right) \right) \left(
t-s\right) ^{\frac{1}{2}}.  \label{tdA31}
\end{eqnarray}%
From (\ref{D2diff1}) it follows that
\begin{eqnarray*}
\left\Vert DA_{3}\right\Vert _{H\otimes H} &\leq &\left\Vert D^{2}\left( \xi
_{t}-\xi _{s}\right) \right\Vert _{H\otimes H}\left\Vert D\xi
_{t}\right\Vert _{H}^{-2} \\
&&+2\left\vert \left\vert I_{\theta }^{t}\left( h^{\prime }D\xi .\right)
\right\vert \right\vert _{H}\left\Vert D^{2}\xi _{t}\right\Vert _{H\otimes
H}\left\Vert D\xi _{t}\right\Vert _{H}^{-3}.
\end{eqnarray*}%
Combining this with Lemma \ref{delta norm}, H\"{o}lder's inequality, Lemma %
\ref{D12diff} and (\ref{tdA31}) we deduce
\begin{eqnarray}
&&\left\Vert \delta (A_{3})\right\Vert _{2p}\leq C_{p}\left( \left\Vert
\left\Vert A_{3}\right\Vert _{H}\right\Vert _{2p}+\left\Vert
DA_{3}\right\Vert _{2p}\right)  \notag \\
&\leq &C_{p}\left\Vert \left\vert \left\vert I_{s}^{t}\left( h^{\prime
}D_{\theta }\xi _{.}\right) \right\vert \right\vert _{H}\right\Vert
_{2p}\left\Vert V_{t}^{-2}\right\Vert _{2p}+\left\Vert Y_{2}\right\Vert
_{4p}\left\Vert V_{t}^{-2}\right\Vert _{4p}  \notag \\
&&+2C_{p}\left\Vert \left\vert \left\vert I_{s}^{t}\left( h^{\prime
}D_{\theta }\xi _{.}\right) \right\vert \right\vert _{H}\right\Vert
_{4p}\left\Vert V_{t}^{-3}\right\Vert _{8p}\left\Vert N_{t}\right\Vert _{8p}
\notag \\
&\leq &C\left( t-s\right) ^{\frac{1}{2}}\left( t-r\right) ^{-1}.
\label{tdA3}
\end{eqnarray}%
Substituting (\ref{tdA1}), (\ref{tdA2}) and (\ref{tdA3}) into (\ref{td})
yields (\ref{tdelta}).
\end{proof}

Now we provide the moment estimates for the conditional transition
probability density $p^{W}\left( r,x;t,y\right)$.

\begin{lemma}
\label{dens bd}Let $c=1\vee {\Vert h\Vert }^{2}$. For any $0\leq r<t\leq T$,
$y\in \mathbb{R}$ and $p\geq 1$,
\begin{equation}
\left( E\left\vert p^{W}\left( r,x;t,y\right) \right\vert ^{2p}\right) ^{%
\frac{1}{2p}}\leq 2\exp \left( -\frac{\left( x-y\right) ^{2}}{64pc\left(
t-r\right) }\right) \left\Vert \delta \left( u_{t}\right) \right\Vert _{4p}.
\label{dens ctrl}
\end{equation}
\end{lemma}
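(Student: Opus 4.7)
The plan is to combine the Malliavin density formula of Lemma \ref{density}, applied conditionally on the Brownian sheet $W$, with a Chernoff-type Gaussian tail estimate for the martingale $\xi_{t}-x$. Since the Malliavin derivative is taken with respect to $B$, treating $W$ as frozen we obtain from Lemma \ref{density} the representation
\[
 p^{W}(r,x;t,y)=E^{B}\bigl[\mathbf{1}_{\{\xi_{t}>y\}}\delta(u_{t})\bigr],
\]
and since $E^{B}\delta(u_{t})=0$ we also have $p^{W}(r,x;t,y)=-E^{B}[\mathbf{1}_{\{\xi_{t}\leq y\}}\delta(u_{t})]$. Selecting whichever of the two representations places the event on the side of $x$ away from $y$, a direct comparison shows
\[
 |p^{W}(r,x;t,y)|\leq E^{B}\bigl[\mathbf{1}_{\{|\xi_{t}-x|\geq|y-x|\}}|\delta(u_{t})|\bigr]
\]
for every $x,y\in\mathbb{R}$.

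I would then separate the probability factor from the divergence factor by conditional Cauchy--Schwarz,
\[
 |p^{W}(r,x;t,y)|\leq \bigl(P^{B}(|\xi_{t}-x|\geq|y-x|)\bigr)^{1/2}\bigl(E^{B}|\delta(u_{t})|^{2}\bigr)^{1/2},
\]
raise to the $2p$-th power, take the total expectation, and bound the result by the (unconditional) Cauchy--Schwarz and Jensen inequalities (the latter applied to the convex map $z\mapsto z^{2p}$ for $p\geq 1$) to reach
\[
 E|p^{W}|^{2p}\leq \bigl(E\bigl[P^{B}(|\xi_{t}-x|\geq|y-x|)^{2p}\bigr]\bigr)^{1/2}\|\delta(u_{t})\|_{4p}^{2p}.
\]
Since $P^{B}\leq 1$ and $2p\geq 1$, the first factor is dominated by $P(|\xi_{t}-x|\geq|y-x|)^{1/2}$.

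The remaining task is a Gaussian tail bound on $\xi_{t}-x=B_{r}^{t}+I_{r}^{t}(h)$. This is a continuous martingale in $t$ whose quadratic variation is the deterministic quantity $(1+\|h\|^{2})(t-r)\leq 2c(t-r)$, so the exponential martingale identity gives $E\exp(\lambda(\xi_{t}-x))\leq\exp(\lambda^{2}c(t-r))$ for every $\lambda\in\mathbb{R}$, and Chernoff yields $P(|\xi_{t}-x|\geq|y-x|)\leq 2\exp(-(y-x)^{2}/(4c(t-r)))$. Substituting this and extracting the $1/(2p)$-th root produces the bound $2^{1/(4p)}\exp(-(y-x)^{2}/(16pc(t-r)))\|\delta(u_{t})\|_{4p}$, which is majorized by the right-hand side of \eref{dens ctrl} because $2^{1/(4p)}\leq 2$ and $1/(16p)\geq 1/(64p)$ when $p\geq 1$. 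The one delicate point is justifying the conditional use of Lemma \ref{density}; this is legitimate because, given $W$, the Malliavin derivative $D\xi_{t}$ and the divergence $\delta(u_{t})$ inherit from Lemmas \ref{Dmoments} and \ref{Edelta u} the integrability needed to invoke the Malliavin density formula $P^{W}$--almost surely.
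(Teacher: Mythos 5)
Your proof is correct, and its skeleton coincides with the paper's: both start from the conditional density representation $p^{W}(r,x;t,y)=E^{B}[\mathbf{1}_{\{\xi_{t}>y\}}\delta(u_{t})]$ of Lemma \ref{density}, separate the indicator from $\delta(u_{t})$ by a conditional Cauchy--Schwarz (the paper invokes Lemma \ref{dens ineq} with $p=q=2$, you rederive it via the complementary representation $-E^{B}[\mathbf{1}_{\{\xi_{t}\leq y\}}\delta(u_{t})]$), and arrive at the same intermediate bound $\left\Vert \delta(u_{t})\right\Vert_{4p}\bigl(E\,P^{B}(|B_{r}^{t}+I_{r}^{t}(h)|>|y-x|)^{2p}\bigr)^{1/4p}$. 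Where you genuinely diverge is in the treatment of this last factor. The paper applies a conditional Chebyshev inequality with the quadratic exponential $\exp\bigl(z^{2}/(32pc(t-r))\bigr)$, then Jensen, and finally computes $E\exp\bigl((B_{r}^{t}+I_{r}^{t}(h))^{2}/(16c(t-r))\bigr)$ using that $B_{r}^{t}$ and $I_{r}^{t}(h)$ are each Gaussian (the latter because its quadratic variation $\Vert h\Vert^{2}(t-r)$ is deterministic). You instead observe that $0\leq P^{B}\leq 1$ reduces the factor to the unconditional tail $P(|B_{r}^{t}+I_{r}^{t}(h)|\geq|y-x|)^{1/4p}$ and bound that by the exponential martingale plus Chernoff, using only the deterministic bound $(1+\Vert h\Vert^{2})(t-r)\leq 2c(t-r)$ on the quadratic variation. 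This is a bit more elementary and robust: it sidesteps the need to identify $I_{r}^{t}(h)$ as Gaussian and avoids the quadratic exponential moment computation of (\ref{Gaussian}), and it even yields the slightly sharper exponent $1/(16p)$ in place of $1/(64p)$; the paper's version, on the other hand, is the one whose conditional Chebyshev step is reused verbatim later (e.g.\ in (\ref{timeA-22})), which is presumably why it is phrased that way.
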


\begin{proof}
By Lemma \ref{density} we can write
\begin{equation}
p^{W}\left( r,x;t,y\right) =E^{B}\left( \mathbf{1}_{\left\{ \xi
_{t}>y\right\} }\delta \left( u_{t}\right) \right) =E^{B}[\mathbf{1}%
_{\left\{ B_{r}^{t}+I_{r}^{t}(h)>y-x\right\} }\delta \left( u_{t}\right) ],
\label{dens}
\end{equation}%
where $B_{r}^{t}$ and $I_{r}^{t}(h)$ are defined in (\ref{notion1}). Then, (%
\ref{density ineq}) implies
\begin{eqnarray}
&&\left( E\left\vert p^{W}\left( r,x;t,y\right) \right\vert ^{2p}\right) ^{%
\frac{1}{2p}}  \notag \\
&\leq &\left( E\left[ \left( P^{B}\left( \left\vert
B_{r}^{t}+I_{r}^{t}\left( h\right) \right\vert >\left\vert y-x\right\vert
\right) \right) ^{p}\left( E^{B}\left\vert \delta \left( u_{t}\right)
\right\vert ^{2}\right) ^{p}\right] \right) ^{\frac{1}{2p}}  \notag \\
&\leq &\left\Vert \delta \left( u_{t}\right) \right\Vert _{4p}\left( E\left(
P^{B}\left( \left\vert B_{r}^{t}+I_{r}^{t}\left( h\right) \right\vert
>\left\vert y-x\right\vert \right) \right) ^{2p}\right) ^{\frac{1}{4p}}.
\label{dens2}
\end{eqnarray}%
Applying Chebyshev and Jensen's inequalities, we\ have for $p\geq 1$,
\begin{eqnarray}
&&E\left\vert P^{B}\left( \left\vert B_{r}^{t}+I_{r}^{t}\left( h\right)
\right\vert >\left\vert y-x\right\vert \right) \right\vert ^{2p}  \notag \\
&\leq &\exp \left( \frac{-2p\left( x-y\right) ^{2}}{32pc\left( t-r\right) }%
\right) E\left\vert E^{B}\exp \frac{\left( B_{r}^{t}+I_{r}^{t}(h)\right) ^{2}%
}{32pc\left( t-r\right) }\right\vert ^{2p}  \notag \\
&\leq &\exp \left( \frac{-\left( x-y\right) ^{2}}{16c\left( t-r\right) }%
\right) E\exp \frac{\left( B_{r}^{t}+I_{r}^{t}(h)\right) ^{2}}{16c\left(
t-r\right) }.  \label{Ulamda}
\end{eqnarray}%
Using the fact that for $0\leq \nu <1/8$ and Gaussian random variables $X,Y$%
,
\begin{equation*}
Ee^{\nu \left( X+Y\right) ^{2}}\leq Ee^{2\nu \left( X^{2}+Y^{2}\right) }\leq
\left( Ee^{4\nu X^{2}}\right) ^{\frac{1}{2}}\left( Ee^{4\nu Y^{2}}\right) ^{%
\frac{1}{2}}=\left( 1-8\nu \right) ^{-\frac{1}{2}},
\end{equation*}%
and noticing that $B_{r}^{t}$ and $I_{r}^{t}(h)$ are Gaussian, we have
\begin{equation}
E\exp \frac{\left( B_{r}^{t}+I_{r}^{t}(h)\right) ^{2}}{16c\left( t-r\right) }%
\leq \left( 1-\frac{1}{2c}\right) ^{-\frac{1}{2}}\leq \sqrt{2}.
\label{Gaussian}
\end{equation}

Combining (\ref{dens2})--(\ref{Gaussian}), we get (\ref{dens ctrl}).
\end{proof}

\setcounter{equation}{0}

\section{H\"older continuity in spatial variable}

In this section, we obtain the H\"{o}lder continuity of $X_{t}(y)$ with
respect to $y$. More precisely, we show that for $t>0$ fixed, $X_{t}(y)$ is
almost surely H\"{o}lder continuous in $y$ with any exponent in $\left(
0,1/2\right) $. This result was proved in \cite{LWXZ11}. Here we provide a
different proof based on Malliavin calculus. We continue to use the
notations $B_{r}^{t}$, $I_{r}^{t}(h)$ (defined by (\ref{notion1})) and $%
u_{t} $ (defined by (\ref{notion2})).

\begin{prop}
\label{Space}Suppose that $h\in H_{2}^{2}(\mathbb{R})$ and $X_{0}=\mu \in
L^{2}(\mathbb{R})$ is bounded. Then, for any $t\in (0,T]$, $\alpha \in
\left( 0,1\right) $ and $p>1$, there exists a constant $C$ depending only on
$p,T$, $\left\Vert h\right\Vert _{2,2}$ and $\left\Vert \mu \right\Vert
_{L^{2}(\mathbb{R})}$ such that
\begin{equation}
E\left\vert X_{t}\left( y_{2}\right) -X_{t}\left( y_{1}\right) \right\vert
^{2p}\leq C(1+t^{-p})\left( y_{2}-y_{1}\right) ^{\alpha p}.  \label{space}
\end{equation}
\end{prop}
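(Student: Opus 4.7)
The plan is to exploit the convolution representation (1.5),
\[
X_t(y) = X_{t,1}(y) + X_{t,2}(y),
\]
and estimate the spatial increment of the two pieces separately. In both cases the key ingredient is an $L^{2p}(\Omega)$ bound on the increment $p^W(r,z;t,y_2) - p^W(r,z;t,y_1)$ of the conditional transition density, obtained by interpolating two estimates. From the representation in Lemma \ref{density} one has
\[
p^W(r,z;t,y_2) - p^W(r,z;t,y_1) = -E^B\bigl[\mathbf{1}_{\{y_1 < \xi_t \leq y_2\}} \delta(u_t)\bigr].
\]
Applying Cauchy-Schwarz under $E^B$ and again on $(\Omega,P)$, together with the identity $P^B(y_1 < \xi_t \leq y_2) = \int_{y_1}^{y_2} p^W(r,z;t,y)\,dy$, Hölder's inequality for this integral, and the pointwise moment bound of Lemma \ref{dens bd} (combined with Lemma \ref{Edelta u}), I would obtain the first estimate
\[
\|p^W(r,z;t,y_2) - p^W(r,z;t,y_1)\|_{2p} \leq C(y_2-y_1)^{1/2} (t-r)^{-3/4} \exp\bigl(-d(z)^2/(C(t-r))\bigr),
\]
where $d(z) := \mathrm{dist}(z,[y_1,y_2])$. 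The second estimate is the trivial one coming from Lemma \ref{dens bd} alone,
\[
\|p^W(r,z;t,y_2) - p^W(r,z;t,y_1)\|_{2p} \leq C(t-r)^{-1/2}\exp\bigl(-d(z)^2/(C(t-r))\bigr).
\]
Interpolating the two with weight $\alpha \in (0,1)$ yields the workhorse bound
\[
\|p^W(r,z;t,y_2) - p^W(r,z;t,y_1)\|_{2p} \leq C(y_2-y_1)^{\alpha/2} (t-r)^{-1/2-\alpha/4}\exp\bigl(-d(z)^2/(C(t-r))\bigr).
\]

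For $X_{t,1}$, I would apply Minkowski's integral inequality against $|\mu(z)|\,dz$ and use Cauchy-Schwarz to bound $\int|\mu(z)|\exp(-d(z)^2/(Ct))\,dz \leq C\|\mu\|_{L^2(\mathbb{R})} t^{1/4}$. This gives $\|X_{t,1}(y_2)-X_{t,1}(y_1)\|_{2p} \leq C(y_2-y_1)^{\alpha/2} t^{-(1+\alpha)/4}$, whose $2p$-th power is dominated by $C(1+t^{-p})(y_2-y_1)^{\alpha p}$ on $(0,T]$.

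For the stochastic term $X_{t,2}$, I would apply the Burkholder-Davis-Gundy inequality for the martingale measure $V$ (conditioning on $W$ so that $p^W$ may be treated as a deterministic integrand) to reduce the control to
\[
C\, E\Bigl|\int_0^t\!\!\int_{\mathbb{R}} \bigl(p^W(r,z;t,y_2)-p^W(r,z;t,y_1)\bigr)^2 X_r(z)\, dr\, dz\Bigr|^p.
\]
A combined Minkowski-Hölder step, together with the standard a priori moment bound $\sup_{r\leq T,\, z\in \mathbb{R}} \|X_r(z)\|_{2p}\leq C$, reduces this further to
\[
C\Bigl(\int_0^t\!\!\int_{\mathbb{R}} \|p^W(r,z;t,y_2)-p^W(r,z;t,y_1)\|_{4p}^2\, dz\, dr\Bigr)^p.
\]
Inserting the interpolated bound and integrating the Gaussian factor in $z$ produces an integrable remaining singularity $(t-r)^{-1/2-\alpha/2}$, valid exactly because $\alpha<1$, and the $r$-integral yields a factor $t^{(1-\alpha)/2}$ bounded on $(0,T]$. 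The final contribution of $X_{t,2}$ is thus $C(y_2-y_1)^{\alpha p}$.

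The main obstacle is the interpolation trade-off: one must balance the H\"older exponent $\alpha/2$ in space against an integrable time singularity in the stochastic-integral term, and the singularity $(t-r)^{-1/2-\alpha/2}$ becomes non-integrable precisely at $\alpha=1$. This is exactly what forces the H\"older exponent to lie in the open interval $(0,1/2)$ and prevents reaching the endpoint $1/2$. A secondary technical point is to quote (or establish) the uniform moment estimate $\sup_{r\leq T,z}\|X_r(z)\|_{2p}\leq C$, which follows from standard arguments for (\ref{equ1}) given $\mu$ bounded and $h\in H_2^2(\mathbb{R})$.
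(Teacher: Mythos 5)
Your overall strategy is the same as the paper's: split via (\ref{cnvl rep}), bound the increment of $p^W$ through the representation of Lemma \ref{density}, interpolate a Lipschitz-in-$y$ estimate against the trivial bound of Lemma \ref{dens bd}, and close with the BDG-type inequality. However, there is a genuine gap in how you carry the spatial localization, and it makes your time integral for $X_{t,2}$ diverge. Your workhorse bound retains only the factor $(y_2-y_1)^{1/2}\exp\bigl(-d(z)^2/(C(t-r))\bigr)$, i.e.\ you have replaced $\int_{y_1}^{y_2}\exp\bigl(-(y-z)^2/(C(t-r))\bigr)\,dy$ by the cruder $(y_2-y_1)\exp\bigl(-d(z)^2/(C(t-r))\bigr)$. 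When you then integrate the squared bound over $z\in\mathbb{R}$, you get
\[
\int_{\mathbb{R}}\exp\Bigl(-\tfrac{2d(z)^{2}}{C(t-r)}\Bigr)\,dz=(y_{2}-y_{1})+C\sqrt{t-r},
\]
not $C\sqrt{t-r}$: the interval $[y_1,y_2]$, on which $d(z)=0$, contributes its full length. The resulting term $(y_2-y_1)^{1+\alpha}\int_0^t(t-r)^{-1-\alpha/2}\,dr$ is infinite for every $\alpha>0$, so the claimed ``integrable remaining singularity $(t-r)^{-1/2-\alpha/2}$'' accounts for only part of the integrand. The paper avoids this by carrying out the $dz\,dx$ integration before discarding the kernel structure: in (\ref{space5}) it uses $\int_{\mathbb{R}}\int_{y_1}^{y_2}\exp\bigl(-(z-x)^2/(c'(t-r))\bigr)\,dz\,dx=C(y_2-y_1)\sqrt{t-r}$, which is the \emph{product} of the two scales rather than their sum, and only then interpolates with (\ref{space6}) to obtain (\ref{space7}). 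You can repair your pointwise route by sharpening $(y_2-y_1)^{1/2}$ to $\bigl(\min(y_2-y_1,\sqrt{t-r})\bigr)^{1/2}$, which is what the inner $dy$-integral actually yields; since $\min(a,b)(a+b)\le 2ab$, this restores the product $(y_2-y_1)\sqrt{t-r}$ and the integrable singularity $(t-r)^{-(1+\alpha)/2}$.

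Two smaller points. First, the same sum-versus-product issue appears in your $X_{t,1}$ step: $\int_{\mathbb{R}}\exp\bigl(-2d(z)^2/(Ct)\bigr)\,dz=(y_2-y_1)+C\sqrt t$, so your claimed bound $C\|\mu\|_{L^2(\mathbb{R})}t^{1/4}$ requires either the same sharpening or a separate (easy) treatment of the regime $y_2-y_1\gtrsim\sqrt t$ via a uniform moment bound. Second, you invoke $\sup_{r\le T,\,z}\|X_r(z)\|_{2p}\le C$ to absorb the weight $X_r(z)$ in the quadratic variation; the paper instead quotes \cite[Lemma 4.1]{LWXZ11} (inequality (\ref{BDG1})), which only requires boundedness of $\mu$ and reduces directly to the deterministic $dr\,dz$ integral. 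Your uniform moment bound is plausible under the standing hypotheses but is not established anywhere in this paper, so it would need its own proof.
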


\begin{proof}
We will use the convolution representation (\ref{cnvl rep}), where the two
terms $X_{t,1}\left( y\right) $ and $X_{t,2}\left( y\right) $ will be
estimated separately.

We start with $X_{t,2}\left( y\right) $. Suppose $y_{1}<y_{2}\in \mathbb{R}$%
. Note that $\mathbf{1}_{\left\{ \xi _{t}>y_{1}\right\} }-\mathbf{1}%
_{\left\{ \xi _{t}>y_{2}\right\} }=\mathbf{1}_{\left\{ y_{1}<\xi _{t}\leq
y_{2}\right\} }$ and
\begin{equation*}
E^{B}\mathbf{1}_{\left\{ y_{1}<\xi _{t}\leq y_{2}\right\} }=P^{B}\left\{
y_{1}<\xi _{t}\leq y_{2}\right\} =\int_{y_{1}}^{y_{2}}p^{W}\left(
r,x;t,z\right) dz.
\end{equation*}%
Therefore by (\ref{dens}) we have
\begin{eqnarray*}
&&\left\vert p^{W}\left( r,x;t,y_{1}\right) -p^{W}\left( r,x;t,y_{2}\right)
\right\vert ^{2}=\left\vert E^{B}\left[ \mathbf{1}_{\left\{ y_{1}<\xi
_{t}<y_{2}\right\} }\delta \left( u_{t}\right) \right] \right\vert ^{2} \\
&\leq &E^{B}\left\vert \delta \left( u_{t}\right) \right\vert
^{2}\int_{y_{1}}^{y_{2}}p^{W}\left( r,x;t,z\right) dz.
\end{eqnarray*}%
Hence,
\begin{eqnarray}
&&\left( E\left\vert p^{W}\left( r,x;t,y_{1}\right) -p^{W}\left(
r,x;t,y_{2}\right) \right\vert ^{2\left( 2p-1\right) }\right) ^{\frac{1}{2p-1%
}}  \notag \\
&\leq &\left\Vert \delta \left( u_{t}\right) \right\Vert
_{4(2p-1)}^{2}\int_{y_{1}}^{y_{2}}\left\Vert p^{W}\left( r,x;t,z\right)
\right\Vert _{2(2p-1)}dz.  \label{space4}
\end{eqnarray}%
Lemma \ref{Edelta u} and Lemma \ref{dens bd} yield
\begin{eqnarray}
&&\int_{\mathbb{R}}\left( E\left\vert p^{W}\left( r,x;t,y_{1}\right)
-p^{W}\left( r,x;t,y_{2}\right) \right\vert ^{2\left( 2p-1\right) }\right) ^{%
\frac{1}{2p-1}}dx  \notag \\
&\leq &C\int_{\mathbb{R}}\left\Vert \delta \left( u_{t}\right) \right\Vert
_{4\left( 2p-1\right) }^{3}\int_{y_{1}}^{y_{2}}\exp \left( \frac{-\left(
z-x\right) ^{2}}{32\left( 2p-1\right) c\left( t-r\right) }\right) dzdx
\notag \\
&\leq &C\left( t-r\right) ^{-1}\left( y_{2}-y_{1}\right) .  \label{space5}
\end{eqnarray}%
On the other hand, the left hand side of (\ref{space5}) can be estimated
differently again by using Lemma \ref{dens bd}:
\begin{eqnarray}
&&\int_{\mathbb{R}}\left( E\left\vert p^{W}\left( r,x;t,y_{1}\right)
-p^{W}\left( r,x;t,y_{2}\right) \right\vert ^{2\left( 2p-1\right) }\right) ^{%
\frac{1}{2p-1}}dx  \notag \\
&\leq &2\int_{\mathbb{R}}\Sigma _{_{i=1,2}}\left( E\left\vert p^{W}\left(
r,x;t,y_{i}\right) \right\vert ^{2\left( 2p-1\right) }\right) ^{\frac{1}{2p-1%
}}dx  \notag \\
&\leq &C_{p}\int_{\mathbb{R}}\Sigma _{_{i=1,2}}\left\Vert \delta \left(
u_{t}\right) \right\Vert _{4\left( 2p-1\right) }^{2}\exp \left( \frac{%
-\left( y_{i}-x\right) ^{2}}{64pc\left( t-r\right) }\right) dx\leq C\left(
t-r\right) ^{-\frac{1}{2}}.~~  \label{space6}
\end{eqnarray}%
Then (\ref{space5}) and (\ref{space6}) yield that for any $\alpha ,\beta >0$
with $\alpha +\beta =1$%
\begin{equation}
\int_{\mathbb{R}}\left( E\left\vert p^{W}\left( r,x;t,y_{1}\right)
-p^{W}\left( r,x;t,y_{2}\right) \right\vert ^{2\left( 2p-1\right) }\right) ^{%
\frac{1}{2p-1}}dx\leq C\left( t-r\right) ^{-\alpha -\frac{1}{2}\beta }\left(
y_{2}-y_{1}\right) ^{\alpha }.  \label{space7}
\end{equation}%
Since $\mu $ is bounded, it follows from \cite[Lemma 4.1]{LWXZ11} that
\begin{eqnarray}
&&E\left\vert \int_{0}^{t}\int_{\mathbb{R}}\left( p^{W}\left(
r,x;t,y_{2}\right) -p^{W}\left( r,x;s,y_{1}\right) \right) ^{2}Z\left(
drdx\right) \right\vert ^{2p}  \notag \\
&\leq &C\left( E\left\vert \int_{0}^{t}\int_{\mathbb{R}}\left( p^{W}\left(
r,x;t,y_{2}\right) -p^{W}\left( r,x;s,y_{1}\right) \right)
^{2}drdx\right\vert ^{2p-1}\right) ^{\frac{p}{2p-1}},~~  \label{BDG1}
\end{eqnarray}%
for any $p\geq 1$, $0\leq s\leq t\leq T$ and $y_{1},y_{2}\in \mathbb{R}$.
Then, applying Minkowski's inequality we obtain for any $0<\alpha \,<1$,
\begin{eqnarray*}
&&\left( E\left\vert X_{t,2}\left( y_{2}\right) -X_{t,2}\left( y_{1}\right)
\right\vert ^{2p}\right) ^{\frac{1}{p}} \\
&\leq &\int_{0}^{t}\int_{\mathbb{R}}\left( E\left\vert p^{W}\left(
r,x;t,y_{1}\right) -p^{W}\left( r,x;t,y_{2}\right) \right\vert ^{2\left(
2p-1\right) }\right) ^{\frac{1}{2p-1}}dxdr \\
&\leq &C\int_{0}^{t}\left( t-r\right) ^{-\alpha -\frac{1}{2}\beta }\left(
y_{2}-y_{1}\right) ^{\alpha }dr\leq C\left( y_{2}-y_{1}\right) ^{\alpha }
\end{eqnarray*}%
since $\left( t-r\right) ^{-\alpha -\frac{1}{2}\beta }=\left( t-r\right)
^{-(1+\alpha )/2}$ is integrable for all $0<{\alpha }<1$.

Now we consider $X_{t,1}(y)$ in (\ref{cnvl rep}). Applying Minkowski's
inequality and using (\ref{space4}) with $2p-1$ replaced by $p$ we get%
\begin{eqnarray*}
&&E\left\vert X_{t,1}\left( y_{2}\right) -X_{t,1}\left( y_{1}\right)
\right\vert ^{2p} \\
&\leq &\left( \int_{\mathbb{R}}\left( E\left\vert p\left( 0,x;t,y_{1}\right)
-p^{W}\left( 0,x;t,y_{2}\right) \right\vert ^{2p}\right) ^{\frac{1}{2p}}\mu
\left( x\right) dx\right) ^{2p} \\
&\leq &C\left\{ \int_{\mathbb{R}}\left( \int_{y_{1}}^{y_{2}}\left\Vert
p^{W}\left( 0,x;t,z\right) \right\Vert _{2p}dz\right) ^{1/2}\left\Vert
\delta \left( u_{t}\right) \right\Vert _{4p}\mu \left( x\right) dx\right\}
^{2p} \\
&\leq &C\left\Vert \delta \left( u_{t}\right) \right\Vert
_{4p}^{2p}\left\Vert \mu \right\Vert _{L^{2}(\mathbb{R})}^{2p}\left( \int_{%
\mathbb{R}}\int_{y_{1}}^{y_{2}}\exp \left( -\frac{\left( z-x\right) ^{2}}{%
64pct}\right) dzdx\right) ^{p} \\
&\leq &C\left\Vert \mu \right\Vert _{L^{2}(\mathbb{R})}^{2p}t^{-p}\left(
y_{2}-y_{1}\right) ^{p}.
\end{eqnarray*}

This completes the proof.
\end{proof}

\setcounter{equation}{0}

\section{H\"older continuity in time variable}

In this section we show that for any fixed $y\in \mathbb{R}$, $X_{t}(y)$ is H%
\"{o}lder continuous in $t$ with any exponent in $(0,1/4)$.

\begin{prop}
\label{time}Suppose that $h\in H_{2}^{2}(\mathbb{R})$ and $X_{0}$ has a
bounded density $\mu \in L^{2}(\mathbb{R})$. Then, for any $p\geq 1$, $0\leq
s<t\leq T$ and $y\in \mathbb{R}$,%
\begin{equation*}
E\left\vert X_{t}\left( y\right) -X_{s}\left( y\right) \right\vert ^{2p}\leq
C(1+t^{-p})\left( t-s\right) ^{\frac{p}{2}-\frac{1}{4}},
\end{equation*}%
where the constant $C$ depending only on $p,T$, $\left\Vert h\right\Vert
_{2,2}$ and $\left\Vert \mu \right\Vert _{L^{2}(\mathbb{R})}$.
\end{prop}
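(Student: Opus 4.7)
Following the convolution representation (\ref{cnvl rep}) for $X_{t}(y)$, the plan is to decompose
\begin{equation*}
X_{t}(y) - X_{s}(y) = D + B_{1} + B_{2},
\end{equation*}
where $D = X_{t,1}(y) - X_{s,1}(y)$, $B_{2} = \int_{s}^{t}\int_{\mathbb{R}}p^{W}(r,z;t,y)\,Z(dr,dz)$, and $B_{1} = \int_{0}^{s}\int_{\mathbb{R}}[p^{W}(r,z;t,y) - p^{W}(r,z;s,y)]\,Z(dr,dz)$, and then to estimate each piece separately. The term $B_{2}$ is handled exactly as the stochastic integral in the proof of Proposition \ref{Space}: the Burkholder-type inequality (\ref{BDG1}) together with Minkowski yields $(E|B_{2}|^{2p})^{1/p} \leq \int_{s}^{t}\int_{\mathbb{R}}(E|p^{W}(r,z;t,y)|^{2(2p-1)})^{1/(2p-1)}\,dz\,dr$, and Lemma \ref{dens bd} followed by the $z$-integration reduces this to $C\int_{s}^{t}(t-r)^{-1/2}\,dr \leq C(t-s)^{1/2}$. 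Hence $E|B_{2}|^{2p}\leq C(t-s)^{p/2}$, which is stronger than required.

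The main work is to estimate $B_{1}$. The same Burkholder reduction leads to controlling
\begin{equation*}
\int_{0}^{s}\int_{\mathbb{R}}(E|\Delta p(r,z)|^{2(2p-1)})^{1/(2p-1)}\,dz\,dr,
\end{equation*}
where $\Delta p(r,z) := p^{W}(r,z;t,y) - p^{W}(r,z;s,y)$. I will split the range of $r$ into a near region $[s-(t-s),s]$ and a far region $[0,s-(t-s)]$. On the near region, the triangle inequality and Lemma \ref{dens bd} produce an integrand of order $(s-r)^{-1/2}$ after $z$-integration, contributing $C(t-s)^{1/2}$ to the double integral. On the far region I use the key decomposition
\begin{equation*}
\Delta p = E^{B}[\mathbf{1}_{\{\xi_{t}>y\}}\,\delta(u_{t}-u_{s})] + E^{B}[(\mathbf{1}_{\{\xi_{t}>y\}} - \mathbf{1}_{\{\xi_{s}>y\}})\,\delta(u_{s})].
\end{equation*}
The first summand is controlled by applying Lemma \ref{dens ineq} conditionally on $W$ with Hölder exponents $2,2$, combining with the Gaussian bound on moments of $P^{B}(|\xi_{t}-z|>|y-z|)$ (as in (\ref{Ulamda})) and the time-difference estimate Lemma \ref{delta timediff}. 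The result is a moment bound of the form $C\exp(-(y-z)^{2}/(C(t-r)))(t-s)(s-r)^{-1}(t-r)^{-1}$, which after $z$- and $r$-integration (using $t-r \geq s-r \geq t-s$ on the far region) contributes another $C(t-s)^{1/2}$.

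The indicator-difference summand is the main technical difficulty. Applying the conditional Cauchy-Schwarz inequality gives $|E^{B}[\cdots]|^{2} \leq P^{B}(\xi_{s}\wedge\xi_{t}\leq y<\xi_{s}\vee\xi_{t})\cdot E^{B}[\delta(u_{s})^{2}]$, and the straddle probability is controlled by conditioning on $\xi_{s}$: the Markov property of $\xi$ given $W$ together with the sub-Gaussian tail of the increment $\xi_{t}-\xi_{s}$ yield
\begin{equation*}
P^{B}(\xi_{s}\wedge\xi_{t}\leq y<\xi_{s}\vee\xi_{t}) \leq C\int_{\mathbb{R}}p^{W}(r,z;s,x)\exp(-(y-x)^{2}/(C(t-s)))\,dx,
\end{equation*}
from which both the Gaussian decay in $|y-z|$ and the $(t-s)^{1/2}$-smallness factor can be extracted by a Gaussian-convolution argument. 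The deterministic term $D$ is handled analogously via $E|D|^{2p} \leq \|\mu\|_{L^{2}(\mathbb{R})}^{2p} E(\int_{\mathbb{R}}|\Delta p(0,z)|^{2}dz)^{p}$, Minkowski's inequality, and the same density-difference estimates evaluated at $r=0$; the blow-up factor $t^{-p}$ in the statement arises from the $(t-r)^{-1}$ factor of Lemma \ref{dens bd} at $r=0$. The main obstacle is to extract both the Gaussian decay in $|y-z|$ and the correct power of $t-s$ from the straddle probability simultaneously, since crude estimates yield one or the other but not both. The near/far splitting is essential because the $(s-r)^{-1}$ singularity produced by Lemma \ref{delta timediff} would otherwise be non-integrable near $r=s$, and the triangle bound on the near region serves to cut off this singularity at the natural scale $s-r\sim t-s$.
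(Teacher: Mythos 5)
You follow the paper's architecture closely: the same three-way decomposition into $X_{t,1}-X_{s,1}$, $\int_0^s\int_{\mathbb{R}}(p^W(r,x;t,y)-p^W(r,x;s,y))Z(dr\,dx)$ and $\int_s^t\int_{\mathbb{R}}p^W(r,x;t,y)Z(dr\,dx)$; the same treatment of the third term via (\ref{BDG1}) and Lemma \ref{dens bd}; and essentially the same treatment of the $\delta(u_t-u_s)$ summand via Lemmas \ref{dens ineq} and \ref{delta timediff} (your near/far splitting of the $r$-range is a legitimate substitute for the paper's interpolation $(t-r)^{-1}\leq (t-s)^{-\frac12-\varepsilon}(s-r)^{-\frac12+\varepsilon}$, and it correctly tames the $(s-r)^{-1}$ singularity). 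The problem is that the one genuinely hard step --- the indicator-difference term $E^B\bigl[(\mathbf{1}_{\{\xi_t>y\}}-\mathbf{1}_{\{\xi_s>y\}})\,\delta(u_\cdot)\bigr]$ --- is exactly the step you do not actually carry out. You yourself flag that ``crude estimates yield one or the other but not both'' of the Gaussian decay in $|y-z|$ and the smallness in $t-s$, and then you resolve this by asserting the pathwise-in-$W$ bound $P^B(\xi_s\wedge\xi_t\leq y<\xi_s\vee\xi_t)\leq C\int_{\mathbb{R}}p^W(r,z;s,x)\exp(-(y-x)^2/(C(t-s)))\,dx$. That bound requires the increment $\xi_t-\xi_s=B_s^t+I_s^t(h)$ to be sub-Gaussian with a \emph{deterministic} constant conditionally on $W$ (and on $\xi_s$). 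This does not follow from the Gaussianity used in (\ref{Ulamda})--(\ref{Gaussian}): there the martingale $I_s^t(h)$ has deterministic bracket $\Vert h\Vert^2(t-s)$ only when $B$ and $W$ are averaged jointly; given $W$, the conditional law of $I_s^t(h)$ is not Gaussian and its tails are not controlled uniformly in $W$. So the key inequality of your argument is unjustified as stated, and the admitted ``main obstacle'' is left unresolved.

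For comparison, the paper resolves this crux by a different and more elementary device: after squaring, the indicator difference becomes the straddle indicators $A_1+A_2$, and the Hölder interpolation (\ref{timeA-2}) splits $\int_{\mathbb{R}}A_i|\delta(u_t)|^2dx$ into the factor $\bigl|\int_{\mathbb{R}}A_i\,dx\bigr|^{1/p_1}$ and a factor carrying $|\delta(u_t)|^{2q_1}$. The point is the exact identity $\int_{\mathbb{R}}A_i\,dx=|B_s^t+I_s^t(h)|$ (integrating the straddle indicator over the \emph{starting point} $x$ gives precisely the increment), which yields the $(t-s)^{\frac12-\frac1{4p}}$ factor in (\ref{timeA-21}) using only the unconditional Gaussianity of $B_s^t+I_s^t(h)$, while the Gaussian decay in $x-y$ is retained through $E(A_i)$ in (\ref{timeA-22}). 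Nothing conditional on $W$ is needed. If you want to salvage your route, you should replace the pathwise conditional bound by an unconditional one: since $P^B(\cdot)\in[0,1]$, all moments of it are dominated by $E(A_i)=P(\text{straddle})$, and the joint Gaussianity and independence of the increments $\xi_s-x$ and $\xi_t-\xi_s$ (both continuous martingales with deterministic brackets over disjoint intervals) do give $P(\text{straddle})\leq C\sqrt{(t-s)/(s-r)}\,e^{-c(x-y)^2/(t-r)}$ on your far region $s-r\geq t-s$; but that computation is precisely the missing content, not a routine ``Gaussian-convolution argument.'' A second, more minor issue: for the deterministic term $D$ your near-region triangle bound at $r=0$ produces a factor $s^{-p/2}$, which is not dominated by $(1+t^{-p})(t-s)^{p/2}$ when $s\ll t$; the paper avoids this by keeping $\delta(u_t)$ (not $\delta(u_s)$) in the indicator term and using $\Vert\mu\Vert_\infty$, as in Proposition \ref{timeX1prop}.
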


We need some preparations to prove the above result.

Suppose $0<s<t$. We start by estimating $X_{\cdot ,2}\left( y\right) $ in (%
\ref{cnvl rep}) and we write%
\begin{eqnarray}
&&X_{t,2}\left( y\right) -X_{s,2}\left( y\right) =\int_{0}^{s}\int_{\mathbb{R%
}}\left( p^{W}\left( r,x;t,y\right) -p^{W}\left( r,x;s,y\right) \right)
Z\left( drdx\right)  \notag \\
&&+\int_{s}^{t}\int_{\mathbb{R}}p^{W}\left( r,x;t,y\right) Z\left(
drdx\right) .  \label{time 0}
\end{eqnarray}%
We are going to estimate the two terms separately.

\begin{lemma}
For any $0\leq s<t\leq T$, $y\in \mathbb{R}$ and $p\geq 1$, we have%
\begin{equation}
E\left( \int_{0}^{s}\int_{\mathbb{R}}\left( p^{W}\left( r,x;t,y\right)
-p^{W}\left( r,x;s,y\right) \right) Z\left( drdx\right) \right) ^{2p}\leq
C\left( t-s\right) ^{\frac{p}{2}-\frac{1}{4}}.  \label{time1}
\end{equation}
\end{lemma}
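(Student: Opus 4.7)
The plan is to apply the BDG-type inequality \eref{BDG1} of \cite{LWXZ11} to the stochastic integral appearing on the left-hand side of \eref{time1}. This reduces the statement to
\[
\int_0^s \int_{\mathbb{R}} \bigl(E|p^W(r,x;t,y)-p^W(r,x;s,y)|^{2(2p-1)}\bigr)^{1/(2p-1)}\,dx\,dr \;\leq\; C(t-s)^{1/2-1/(4p)}.
\]
Writing $p^W(r,x;u,y)=E^B[\mathbf{1}_{\xi_u>y}\delta(u_u)]$ via Lemma \ref{density} (with initial condition $\xi_r=x$), I would decompose
\[
p^W(r,x;t,y)-p^W(r,x;s,y) \;=\; A_1 \;+\; A_2,
\]
where $A_1=E^B[\mathbf{1}_{\xi_t>y}\delta(u_t-u_s)]$ and $A_2=E^B[(\mathbf{1}_{\xi_t>y}-\mathbf{1}_{\xi_s>y})\delta(u_s)]$, and bound each piece separately.

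For $A_1$, I would mimic the proof of Lemma \ref{dens bd}: Lemma \ref{dens ineq} applied conditional on $W$ yields $|A_1|\le (P^B(|B_r^t+I_r^t(h)|>|y-x|))^{1/2}\|\delta(u_t-u_s)\|_{L^2(B)}$, and the Gaussian-tail computation \eref{Ulamda}--\eref{Gaussian} combined with the estimate $\|\delta(u_t-u_s)\|_{L^{4(2p-1)}}\le C(t-s)^{1/2}(s-r)^{-1/2}(t-r)^{-1/2}$ from Lemma \ref{delta timediff} gives, after integrating in $x$ and using $(t-r)^{-1/2}\le(t-s)^{-1/2}$,
\[
\int_{\mathbb{R}} (E|A_1|^{2(2p-1)})^{1/(2p-1)}\,dx \;\leq\; C(t-s)^{1/2}(s-r)^{-1}.
\]
For $A_2$, conditional Cauchy--Schwarz gives $|A_2|^2\le P^B(\xi_s\wedge\xi_t\le y\le \xi_s\vee\xi_t)\cdot\|\delta(u_s)\|_{L^2(B)}^2$. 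Conditioning on $\xi_s=z$ and applying the Gaussian-tail argument of Lemma \ref{dens bd} to the conditional law of $\xi_t-\xi_s$ (whose quadratic variation is the deterministic quantity $(1+\|h\|^2)(t-s)$) produces
\[
P^B\bigl(\xi_s\wedge\xi_t\le y\le \xi_s\vee\xi_t\bigr) \;\leq\; C\int_{\mathbb{R}} p^W(r,x;s,z)\exp\!\left(-\frac{(y-z)^2}{C(t-s)}\right)\,dz.
\]
Plugging the Gaussian moment bound for $\|p^W(r,x;s,z)\|_{L^{4(2p-1)}}$ from Lemma \ref{dens bd} into this estimate and evaluating the resulting Gaussian convolution yields the matching fine bound $\int_{\mathbb{R}} (E|A_2|^{2(2p-1)})^{1/(2p-1)}\,dx \le C(t-s)^{1/2}(s-r)^{-1}$.

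The fine bound $C(t-s)^{1/2}(s-r)^{-1}$ just obtained is not integrable at $r=s$, so I would complement it with a coarse bound $\int_{\mathbb{R}} (E|p^W-p^W|^{2(2p-1)})^{1/(2p-1)}\,dx \le C(s-r)^{-1/2}$ derived by applying Lemma \ref{dens bd} separately to $p^W(r,x;t,y)$ and $p^W(r,x;s,y)$. Geometric interpolation with weight $\theta\in[0,1]$ then gives
\[
\int_{\mathbb{R}} (E|p^W-p^W|^{2(2p-1)})^{1/(2p-1)}\,dx \;\leq\; C(t-s)^{\theta/2}(s-r)^{-(1+\theta)/2},
\]
and the choice $\theta=1-1/(2p)$ keeps $(1+\theta)/2<1$; integrating over $r\in[0,s]$ then yields $\int_0^s(\cdots)\,dr \le C T^{1/(4p)}(t-s)^{1/2-1/(4p)}$, and raising to the $p$-th power through the BDG step completes \eref{time1}. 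The main obstacle is the $A_2$ estimate: extracting Gaussian decay in $|x-y|$ together with a $(t-s)^{1/2}$ factor requires a Chapman--Kolmogorov-style conditioning on the intermediate position $\xi_s$ combined with uniform Gaussian-tail control of the conditional law of $\xi_t-\xi_s$, a step with no direct analogue in the spatial-regularity argument of Proposition \ref{Space}.
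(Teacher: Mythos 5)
Your overall skeleton coincides with the paper's: the reduction via \eref{BDG1} and Minkowski's inequality to an estimate of $\int_0^s\int_{\mathbb{R}}\bigl(E|p^W(r,x;t,y)-p^W(r,x;s,y)|^{2(2p-1)}\bigr)^{1/(2p-1)}dx\,dr$, the splitting of $\mathbf{1}_{\{\xi_t>y\}}\delta(u_t)-\mathbf{1}_{\{\xi_s>y\}}\delta(u_s)$ into an increment-of-$\delta$ term and an increment-of-the-indicator term (you attach $\delta(u_t-u_s)$ to $\mathbf{1}_{\{\xi_t>y\}}$, the paper to $\mathbf{1}_{\{\xi_s>y\}}$; this is immaterial), and the use of Lemma \ref{delta timediff} on the former, exactly as the paper does for its term $I_2$. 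Where you genuinely diverge is the indicator-difference term: the paper never analyzes the probability that $y$ lies between $\xi_s$ and $\xi_t$ as a function of $t-s$; instead it applies H\"older's inequality in the $x$-integral, $\int_{\mathbb{R}}A_i|\delta(u_t)|^2dx\le\bigl(\int_{\mathbb{R}}A_i\,dx\bigr)^{1/p_1}\bigl(\int_{\mathbb{R}}A_i|\delta(u_t)|^{2q_1}dx\bigr)^{1/q_1}$ and uses that $\int_{\mathbb{R}}A_i\,dx=|B_s^t+I_s^t(h)|$ exactly, so that $p_1=1-\tfrac{1}{2p}$ yields the factor $(t-s)^{\frac12-\frac1{4p}}$ at once and the surviving $r$-integrand $(t-r)^{\frac1{4p}-1}$ is already integrable; no interpolation with a coarse bound is needed. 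Your interpolation step is itself sound and produces the same final exponent.

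The one genuine gap is the $W$-pathwise Gaussian bound you assert for $P^B(\xi_s\wedge\xi_t\le y\le\xi_s\vee\xi_t)$. Deterministic quadratic variation makes $B_s^t+I_s^t(h)$ Gaussian under the \emph{unconditional} law $P$; conditionally on $W$ it is neither Gaussian nor independent of $\xi_s$, so the displayed inequality with the factor $\exp(-(y-z)^2/(C(t-s)))$ cannot be claimed for a.e.\ fixed $W$. If you retreat to controlling only $E\,P^B(\cdots)=P(\cdots)$ and use $P^B\le 1$ to reach the $2(2p-1)$ moment, the factor $(t-s)^{1/2}$ is diluted to $(t-s)^{1/(4(2p-1))}$, which is not enough. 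The repair is to work throughout with $L^q(\Omega)$ norms: by the conditional Markov property write $P^B(\cdots)\le\int_{\mathbb{R}}p^W(r,x;s,z)\,P^B\bigl(|B_s^t+I_s^t(h)|\ge|y-z|\mid\xi_s=z\bigr)dz$, apply Minkowski and Cauchy--Schwarz in $z$, bound $\|p^W(r,x;s,z)\|_{L^q}$ by Lemma \ref{dens bd} and the $L^q$ norm of the conditional tail probability by the Chebyshev--Jensen computation \eref{Ulamda}--\eref{Gaussian} run from time $s$; the resulting Gaussian convolution gives $\|P^B(\cdots)\|_{L^q}\le C\sqrt{(t-s)/(t-r)}\,\exp(-(y-x)^2/(Cq(t-r)))$, which does restore your claimed bound $C(t-s)^{1/2}(s-r)^{-1}$ after integrating in $x$. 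With that repair your argument closes; as written, the $A_2$ estimate is unsupported.
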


\begin{proof}
From (\ref{dens}), we have for $0<r<s<t\leq T$,
\begin{eqnarray*}
&&p^{W}\left( r,x;t,y\right) -p^{W}\left( r,x;s,y\right) =E^{B}\left[
\mathbf{1}_{\left\{ \xi _{t}>y\right\} }\delta \left( u_{t}\right) -\mathbf{1%
}_{\left\{ \xi _{s}>y\right\} }\delta \left( u_{s}\right) \right] \\
&=&E^{B}\left[ \left( 1_{\left\{ \xi _{t}>y\right\} }-\mathbf{1}_{\left\{
\xi _{s}>y\right\} }\right) \delta \left( u_{t}\right) +\mathbf{1}_{\left\{
\xi _{s}>y\right\} }\delta \left( u_{t}-u_{s}\right) \right] ,
\end{eqnarray*}%
Let $I_{1}\equiv \left( 1_{\left\{ \xi _{t}>y\right\} }-\mathbf{1}_{\left\{
\xi _{s}>y\right\} }\right) \delta \left( u_{t}\right) $ and $I_{2}\equiv
\mathbf{1}_{\left\{ \xi _{s}>y\right\} }\delta \left( u_{t}-u_{s}\right) $.
Then (\ref{BDG1}) implies%
\begin{eqnarray}
&&E\left( \int_{0}^{s}\int_{\mathbb{R}}\left( p^{W}\left( r,x;t,y\right)
-p^{W}\left( r,x;s,y\right) \right) Z\left( drdx\right) \right) ^{2p}  \notag
\\
&\leq &\left[ E\left( \int_{0}^{s}\int_{\mathbb{R}}\left(
E^{B}[I_{1}+I_{2}]\right) ^{2}drdx\right) ^{2p-1}\right] ^{\frac{p}{2p-1}}
\notag \\
&\leq &C\sum_{i=1,2}\left[ E\left( \int_{0}^{s}\int_{\mathbb{R}}\left(
E^{B}I_{i}\right) ^{2}drdx\right) ^{2p-1}\right] ^{\frac{p}{2p-1}}.
\label{timeA0}
\end{eqnarray}

First, we study the term $I_{1}$. Note that
\begin{equation*}
\left( 1_{\left\{ \xi _{t}>y\right\} }-\mathbf{1}_{\left\{ \xi
_{s}>y\right\} }\right) ^{2}=1_{\left\{ \xi _{s}\leq y<\xi _{t}\right\} }+%
\mathbf{1}_{\left\{ \xi _{t}\leq y<\xi _{s}\right\} }=:A_{1}+A_{2}.
\end{equation*}%
Then we can write%
\begin{eqnarray}
&&\left[ E\left( \int_{0}^{s}\int_{\mathbb{R}}E^{B}I_{1}^{2}drdx\right)
^{2p-1}\right] ^{\frac{1}{2p-1}}  \notag \\
&=&\left[ E\left( \int_{0}^{s}\int_{\mathbb{R}}E^{B}\left[ \left(
A_{1}+A_{2}\right) \delta \left( u_{t}\right) \right] ^{2}drdx\right) ^{2p-1}%
\right] ^{\frac{1}{2p-1}}  \notag \\
&\leq &2\sum_{i=1,2}\left[ E\left( \int_{0}^{s}\int_{\mathbb{R}}E^{B}\left[
A_{i}\delta \left( u_{t}\right) \right] ^{2}drdx\right) ^{2p-1}\right] ^{%
\frac{1}{2p-1}}.  \label{timeA-1}
\end{eqnarray}%
Applying Minkowski, Jensen and H\"{o}lder's inequalities we deduce that for $%
i=1,2$ and for any conjugate pair $\left( p_{1},q_{1}\right) $%
\begin{eqnarray}
&&\left[ E\left( \int_{0}^{s}\int_{\mathbb{R}}E^{B}\left[ A_{i}\delta \left(
u_{t}\right) \right] ^{2}drdx\right) ^{2p-1}\right] ^{\frac{1}{2p-1}}  \notag
\\
&\leq &\int_{0}^{s}\left( E\left\vert \int_{\mathbb{R}}A_{i}\left\vert
\delta \left( u_{t}\right) \right\vert ^{2}dx\right\vert ^{2p-1}\right) ^{%
\frac{1}{2p-1}}dr  \notag \\
&\leq &\int_{0}^{s}\left\Vert \left( \int_{\mathbb{R}}A_{i}dx\right) ^{\frac{%
1}{p_{1}}}\left( \int_{\mathbb{R}}A_{i}\left\vert \delta \left( u_{t}\right)
\right\vert ^{2q_{1}}dx\right) ^{\frac{1}{q_{1}}}\right\Vert _{2p-1}dr
\notag \\
&\leq &\int_{0}^{s}\left\Vert \left\vert \int_{\mathbb{R}}A_{i}dx\right\vert
^{\frac{1}{p_{1}}}\right\Vert _{2\left( 2p-1\right) }\left\Vert \left( \int_{%
\mathbb{R}}A_{i}\left\vert \delta \left( u_{t}\right) \right\vert
^{2q_{1}}dx\right) ^{\frac{1}{q_{1}}}\right\Vert _{2\left( 2p-1\right) }dr.
\label{timeA-2}
\end{eqnarray}%
Notice that
\begin{eqnarray*}
\left\{ \xi _{s}\leq y<\xi _{t}\right\} &=&\left\{
y-B_{r}^{t}-I_{r}^{t}(h)<x\leq y-B_{r}^{s}-I_{r}^{s}(h)\right\} , \\
\left\{ \xi _{t}\leq y<\xi _{s}\right\} &=&\left\{
y-B_{r}^{s}-I_{r}^{s}(h)<x\leq y-B_{r}^{t}-I_{r}^{t}(h)\right\} .
\end{eqnarray*}%
Then, for $i=1,2$, we have
\begin{equation*}
\left\vert \int_{\mathbb{R}}A_{i}dx\right\vert =\left\vert
B_{s}^{t}+I_{s}^{t}(h)\right\vert .
\end{equation*}%
Hence for $p_{1}=1-\frac{1}{2p}$,
\begin{equation}
\left\Vert \left\vert \int_{\mathbb{R}}A_{i}dx\right\vert ^{\frac{1}{p_{1}}%
}\right\Vert _{2\left( 2p-1\right) }\leq C\left( t-s\right) ^{\frac{1}{2}-%
\frac{1}{4p}}.  \label{timeA-21}
\end{equation}%
On the other hand, we have
\begin{eqnarray*}
\left\{ \xi _{s}\leq y<\xi _{t}\right\} &=&\left\{
B_{r}^{s}+I_{r}^{s}(h)\leq y-x<B_{r}^{t}+I_{r}^{t}(h)\right\} \\
&\subset &\left\{ \left\vert x-y\right\vert \leq \left\vert
B_{r}^{t}+I_{r}^{t}(h)\right\vert +\left\vert
B_{r}^{s}+I_{r}^{s}(h)\right\vert \right\} .
\end{eqnarray*}%
Similarly
\begin{equation*}
\left\{ \xi _{t}\leq y<\xi _{s}\right\} \subset \left\{ \left\vert
x-y\right\vert \leq \left\vert B_{r}^{t}+I_{r}^{t}(h)\right\vert +\left\vert
B_{r}^{s}+I_{r}^{s}(h)\right\vert \right\} .
\end{equation*}%
Applying Chebyshev's inequality and (\ref{Gaussian}), we deduce that for $%
i=1,2$,
\begin{eqnarray}
&&E\left( A_{i}\right) \leq EP^{B}\left\{ \left\vert x-y\right\vert \leq
\left\vert B_{r}^{t}+I_{r}^{t}(h)\right\vert +\left\vert
B_{r}^{s}+I_{r}^{s}(h)\right\vert \right\}  \notag \\
&\leq &\exp \left( \frac{-\left( x-y\right) ^{2}}{32c\left( t-r\right) }%
\right) E\exp \left( \frac{\left\vert B_{r}^{t}+I_{r}^{t}(h)\right\vert ^{2}%
}{16c(t-r)}+\frac{\left\vert B_{r}^{s}+I_{r}^{s}(h)\right\vert ^{2}}{16c(s-r)%
}\right)  \notag \\
&\leq &2\exp \left( -\frac{\left( x-y\right) ^{2}}{32c\left( t-r\right) }%
\right) .  \label{timeA-22}
\end{eqnarray}%
Using Minkowski and H\"{o}lder's inequalities, from (\ref{timeA-22}) and
Lemma \ref{Edelta u} we obtain that for $q_{1}=2p\leq 2\left( 2p-1\right) $,
\begin{eqnarray}
&&\left\Vert \left( \int_{\mathbb{R}}A_{i}\left\vert \delta \left(
u_{t}\right) \right\vert ^{2q_{1}}dx\right) ^{\frac{1}{q_{1}}}\right\Vert
_{2\left( 2p-1\right) }\leq \left( \int_{\mathbb{R}}\left\Vert
A_{i}\left\vert \delta \left( u_{t}\right) \right\vert ^{2q_{1}}\right\Vert
_{\frac{2\left( 2p-1\right) }{q_{1}}}dx\right) ^{\frac{1}{q_{1}}}  \notag \\
&\leq &\left( \int_{\mathbb{R}}\left( EA_{i}\right) ^{\frac{q_{1}}{4\left(
2p-1\right) }}\left\Vert \delta \left( u_{t}\right) \right\Vert _{8\left(
2p-1\right) }^{2q_{1}}dx\right) ^{\frac{1}{q_{1}}}\leq C\left( t-r\right) ^{%
\frac{1}{4p}-1}.  \label{timeA-23}
\end{eqnarray}%
Substituting (\ref{timeA-21}) and (\ref{timeA-23}) into (\ref{timeA-2}) we
obtain%
\begin{eqnarray}
&&\left[ E\left( \int_{0}^{s}\int_{\mathbb{R}}E^{B}\left[ A_{i}\delta \left(
u_{t}\right) \right] ^{2}drdx\right) ^{2p-1}\right] ^{\frac{1}{2p-1}}  \notag
\\
&\leq &C\left( t-s\right) ^{\frac{1}{2}-\frac{1}{4p}}\int_{0}^{s}\left(
t-r\right) ^{\frac{1}{4p}-1}dr\leq C\left( t-s\right) ^{\frac{1}{2}-\frac{1}{%
4p}}.  \label{timeA-24}
\end{eqnarray}%
Combining (\ref{timeA-1}) and (\ref{timeA-24}), we have%
\begin{equation}
\left[ E\left( \int_{0}^{s}\int_{\mathbb{R}}E^{B}I_{1}^{2}drdx\right) ^{2p-1}%
\right] ^{\frac{1}{2p-1}}\leq C\left( t-s\right) ^{\frac{1}{2}-\frac{1}{4p}}.
\label{timeI1}
\end{equation}

We turn into the term $I_{2}$. From Lemma \ref{dens ineq} we can deduce as
in Lemma \ref{dens bd} that%
\begin{equation*}
\left( E\left( E^{B}I_{2}\right) ^{2\left( 2p-1\right) }\right) ^{\frac{1}{%
2p-1}}\leq 2\exp \left( \frac{-\left( x-y\right) ^{2}}{32\left( 2p-1\right)
c\left( s-r\right) }\right) \left\Vert \delta \left( u_{t}-u_{s}\right)
\right\Vert _{4\left( 2p-1\right) }^{2}.
\end{equation*}%
Then applying Minkowski's inequality and Lemma \ref{delta timediff}, we
obtain
\begin{eqnarray}
&&\left[ E\left( \int_{0}^{s}\int_{\mathbb{R}}\left( E^{B}I_{2}\right)
^{2}drdx\right) ^{2p-1}\right] ^{\frac{1}{2p-1}}\leq \int_{0}^{s}\int_{%
\mathbb{R}}\left( E\left( E^{B}I_{2}\right) ^{2\left( 2p-1\right) }\right) ^{%
\frac{1}{2p-1}}drdx  \notag \\
&\leq &2\int_{0}^{s}\int_{\mathbb{R}}\exp \left( -\frac{\left( x-y\right)
^{2}}{32\left( 2p-1\right) \left( s-r\right) }\right) \left\Vert \delta
\left( u_{t}-u_{s}\right) \right\Vert _{4\left( 2p-1\right) }^{2}dxdr  \notag
\\
&\leq &C\left( t-s\right) \int_{0}^{s}\left( s-r\right) ^{\frac{1}{2}%
-1}\left( t-r\right) ^{-1}dr\leq C\left( t-s\right) ^{\frac{1}{2}-\frac{1}{4p%
}},  \label{timeI2}
\end{eqnarray}%
where in the last step we used that $\left( t-r\right) ^{-1}\leq \left(
t-s\right) ^{-\frac{1}{2}-\varepsilon }\left( s-r\right) ^{-\frac{1}{2}%
+\varepsilon }$ for any $\varepsilon >0$.

Substituting (\ref{timeI1}) and (\ref{timeI2}) in (\ref{timeA0}) we obtain (%
\ref{time1}).
\end{proof}

\begin{lemma}
For any $0\leq s<t\leq T$ and any $y\in \mathbb{R}$ and $p\geq 1$, we have%
\begin{equation}
E\left( \int_{s}^{t}\int_{\mathbb{R}}p^{W}\left( r,x;t,y\right) Z\left(
drdx\right) \right) ^{2p}\leq C\left( t-s\right) ^{\frac{p}{2}}.
\label{time2}
\end{equation}
\end{lemma}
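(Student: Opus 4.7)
The plan is to reduce the estimate to the machinery already built in Lemmas \ref{Edelta u} and \ref{dens bd}, combined with the Burkholder-Davis-Gundy inequality for the martingale measure $Z$ that was recalled via (\ref{BDG1}) (Lemma 4.1 of \cite{LWXZ11}). First I would apply that inequality with integrand $p^{W}(r,x;t,y)$ on the time interval $[s,t]$, obtaining
$$
E\left( \int_{s}^{t}\int_{\mathbb{R}} p^{W}(r,x;t,y)\, Z(drdx)\right)^{2p}
\leq C\left(E\left|\int_{s}^{t}\int_{\mathbb{R}} p^{W}(r,x;t,y)^{2}\, drdx\right|^{2p-1}\right)^{\frac{p}{2p-1}}.
$$

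Next, Minkowski's inequality moves the $L^{2p-1}(\Omega)$ norm inside the $dr\,dx$ integral, so it suffices to control $\int_{s}^{t}\!\!\int_{\mathbb{R}} \|p^{W}(r,x;t,y)\|_{2(2p-1)}^{2}\, dx\, dr$. Applying Lemma \ref{dens bd} with $p$ replaced by $2p-1$ gives the pointwise bound
$$
\|p^{W}(r,x;t,y)\|_{2(2p-1)}^{2}\leq 4\exp\!\left(-\frac{(x-y)^{2}}{32(2p-1)c(t-r)}\right)\|\delta(u_{t})\|_{4(2p-1)}^{2},
$$
and Lemma \ref{Edelta u} bounds $\|\delta(u_{t})\|_{4(2p-1)}\leq C(t-r)^{-1/2}$.

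Integrating the Gaussian in $x$ produces a factor of order $(t-r)^{1/2}$, which combined with the $(t-r)^{-1}$ coming from the $\delta(u_t)$ estimate yields the integrable singularity $(t-r)^{-1/2}$. Integrating over $r\in[s,t]$ therefore gives $\int_{s}^{t}(t-r)^{-1/2}dr=2(t-s)^{1/2}$. Plugging this back through the BDG reduction, the outer exponent $\tfrac{p}{2p-1}\cdot(2p-1)=p$ then produces the desired bound $C(t-s)^{p/2}$.

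I do not expect a genuine obstacle: every ingredient is already available, and only bookkeeping of the exponents needs to be done carefully. The only step one must check with some care is that in the chain $\tfrac{1}{2p-1}\to 1\to p/(2p-1)\to p$, the various Minkowski and BDG exponents combine correctly to transport the deterministic estimate $\int_{s}^{t}(t-r)^{-1/2}dr\lesssim (t-s)^{1/2}$ into the final $(t-s)^{p/2}$; here no $t^{-p}$ blow-up appears because the kernel is evaluated against $Z$ rather than against the initial condition $\mu$.
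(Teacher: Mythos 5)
Your proposal is correct and follows essentially the same route as the paper: the BDG-type bound from Lemma 4.1 of \cite{LWXZ11}, Minkowski's inequality to pass the $L^{2p-1}(\Omega)$ norm inside the $dr\,dx$ integral, the Gaussian bound of Lemma \ref{dens bd} (with $p$ replaced by $2p-1$) together with Lemma \ref{Edelta u}, and the resulting integrable singularity $\int_s^t (t-r)^{-1/2}\,dr \le 2(t-s)^{1/2}$, raised to the power $p$. The exponent bookkeeping you flag as the only delicate point is exactly what the paper does, and it checks out.
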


\begin{proof}
Since $\mu $ is bounded, it follows from \cite[Lemma 4.1]{LWXZ11} that
\begin{equation}
E\left\vert \int_{s}^{t}\int_{\mathbb{R}}p^{W}\left( r,x;t,y\right)
^{2}Z\left( drdx\right) \right\vert ^{2p}\leq C\left( E\left\vert
\int_{s}^{t}\int_{\mathbb{R}}p^{W}\left( r,x;t,y\right) ^{2}drdx\right\vert
^{2p-1}\right) ^{\frac{p}{2p-1}},  \label{BDG2}
\end{equation}%
for any $p\geq 1$ and $y\in \mathbb{R}$. Applying Minkowski's inequality,
Lemma \ref{dens bd}\ and Lemma \ref{Edelta u} we obtain%
\begin{eqnarray*}
&&\left[ E\left( \int_{0}^{s}\int_{\mathbb{R}}\left\vert p^{W}\left(
r,x;t,y\right) \right\vert ^{2}drdx\right) ^{2p-1}\right] ^{\frac{1}{2p-1}}
\\
&\leq &C\int_{s}^{t}\int_{\mathbb{R}}\left( E\left\vert p^{W}\left(
r,x;t,y\right) \right\vert ^{2\left( 2p-1\right) }\right) ^{\frac{1}{2p-1}%
}drdx \\
&\leq &C\int_{s}^{t}\int_{\mathbb{R}}\exp \left( -\frac{\left( x-y\right)
^{2}}{32c\left( t-r\right) }\right) \left\Vert \delta \left( u_{t}\right)
\right\Vert _{4\left( 2p-1\right) }^{2}drdx \\
&\leq &C\int_{s}^{t}\left( t-r\right) ^{\frac{1}{2}-1}dr\leq C\left(
t-s\right) ^{\frac{1}{2}}.
\end{eqnarray*}%
Then (\ref{time2}) follows immediately.
\end{proof}

In summary of the above two lemmas, we get

\begin{proposition}
\label{timeX2prop} For any $p\geq 1$, $0\leq s<t\leq T$ and $y\in \mathbb{R}$%
, we have%
\begin{equation*}
E\left\vert X_{t,2}\left( y\right) -X_{s,2}\left( y\right) \right\vert
^{2p}\leq C\left( t-s\right) ^{\frac{p}{2}-\frac{1}{4}}.
\end{equation*}
\end{proposition}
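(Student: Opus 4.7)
The plan is to combine the two preceding lemmas via the decomposition \eref{time 0}. I would write
\[
X_{t,2}(y) - X_{s,2}(y) = J_1 + J_2
\]
with
\[
J_1 = \int_0^s\!\!\int_{\mathbb{R}} \bigl(p^W(r,x;t,y)-p^W(r,x;s,y)\bigr)\, Z(dr,dx), \qquad J_2 = \int_s^t\!\!\int_{\mathbb{R}} p^W(r,x;t,y)\, Z(dr,dx),
\]
and then apply the elementary inequality $|a+b|^{2p}\leq 2^{2p-1}(|a|^{2p}+|b|^{2p})$ to reduce the statement to controlling $E|J_1|^{2p}$ and $E|J_2|^{2p}$ separately; but this is exactly the content of the two preceding lemmas.

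Concretely, the first lemma yields $E|J_1|^{2p}\leq C(t-s)^{p/2-1/4}$ and the second yields $E|J_2|^{2p}\leq C(t-s)^{p/2}$. Since $t-s\leq T$, I would absorb the $J_2$ estimate into the target exponent via $(t-s)^{p/2}=(t-s)^{p/2-1/4}\cdot (t-s)^{1/4}\leq T^{1/4}(t-s)^{p/2-1/4}$. Summing the two contributions delivers the desired bound, with a constant depending only on $p$, $T$, $\|h\|_{2,2}$ and $\|\mu\|_{L^{2}(\mathbb{R})}$.

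With both lemmas already at hand the proof itself is essentially mechanical, so no further obstacle arises for this proposition. The actual technical work, and the place where the exponent $p/2-1/4$ gets locked in, sits inside the first of the two preceding lemmas: one splits the kernel difference via Lemma \ref{density} into $(\mathbf{1}_{\{\xi_t>y\}}-\mathbf{1}_{\{\xi_s>y\}})\delta(u_t)$ and $\mathbf{1}_{\{\xi_s>y\}}\delta(u_t-u_s)$, bounds the first piece through the identity $\int_{\mathbb{R}}(\mathbf{1}_{\{\xi_s\leq y<\xi_t\}}+\mathbf{1}_{\{\xi_t\leq y<\xi_s\}})\,dx=|B_s^t+I_s^t(h)|$ combined with the Gaussian tail from Lemma \ref{dens bd}, and handles the second through the sharp divergence estimate of Lemma \ref{delta timediff}. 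If one were proving the proposition from scratch, that balancing of $L^p$ H\"older exponents $p_1,q_1$ against the Gaussian factor $\exp(-(x-y)^2/(c(t-r)))$ and the singular-in-$r$ divergence bound $(t-s)^{1/2}(s-r)^{-1/2}(t-r)^{-1/2}$ would be the real obstacle.
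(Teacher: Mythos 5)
Your proposal is correct and follows exactly the paper's route: the paper proves this proposition by combining the two preceding lemmas through the decomposition \eref{time 0}, just as you do. The only (trivial) detail you add explicitly—absorbing the $(t-s)^{p/2}$ bound into $(t-s)^{p/2-\frac14}$ via $t-s\leq T$—is left implicit in the paper, so there is nothing further to check.
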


Now we consider $X_{t,1}\left( y\right) $. Note that
\begin{eqnarray*}
&&E\left\vert X_{t,1}\left( y\right) -X_{s,1}\left( y\right) \right\vert
^{2p}=E\left\vert \int_{\mathbb{R}}\left( p^{W}\left( 0,z;t,y\right)
-p^{W}\left( 0,z;t,y\right) \right) \mu (z)dz\right\vert ^{2p} \\
&=&E\left\vert \int_{\mathbb{R}}\left( E^{B}\left[ \mathbf{1}_{\left\{ \xi
_{t}>y\right\} }\delta \left( u_{t}\right) -\mathbf{1}_{\left\{ \xi
_{s}>y\right\} }\delta \left( u_{s}\right) \right] \right) \mu
(z)dz\right\vert ^{2p}.
\end{eqnarray*}%
Then, similar to the proof for $X_{\cdot ,2}\left( y\right) $ we get
estimates for $X_{\cdot ,1}\left( y\right) .$

\begin{proposition}
\label{timeX1prop} For any $p\geq 1,$ $0\leq s<t\leq T$ and any $y\in
\mathbb{R}$, we have%
\begin{equation}
E\left\vert X_{t,1}\left( y\right) -X_{s,1}\left( y\right) \right\vert
^{2p}\leq C\left( 1+t^{-p}\right) \left( t-s\right) ^{\frac{1}{2}p}.
\label{time X1}
\end{equation}
\end{proposition}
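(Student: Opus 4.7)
The plan is to mirror the estimates in the proof of Proposition~\ref{timeX2prop}, now replacing the outer stochastic integral $Z(dr,dz)=\sqrt{X_r(z)}V(dr,dz)$ against which $p^W$ is integrated there by the deterministic density $\mu(z)\,dz$, and correspondingly replacing the Burkholder--Davis--Gundy inequality by a Cauchy--Schwarz inequality in $z$ based on $\mu\in L^2(\mathbb{R})$. Starting from
\[
p^W(0,z;t,y)-p^W(0,z;s,y)=E^B\bigl[I_1(z)+I_2(z)\bigr],
\]
with $\xi$ started at $z$ at time $0$, $I_1(z)=(\mathbf{1}_{\{\xi_t>y\}}-\mathbf{1}_{\{\xi_s>y\}})\delta(u_t)$, and $I_2(z)=\mathbf{1}_{\{\xi_s>y\}}\delta(u_t-u_s)$, I write $X_{t,1}(y)-X_{s,1}(y)=K_1+K_2$ with $K_i=\int\mu(z)E^B[I_i(z)]\,dz$. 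Cauchy--Schwarz in $z$ yields $|K_i|^2\le\|\mu\|^{2}_{L^2(\mathbb{R})}\int|E^BI_i|^2\,dz$, which after taking $2p$--moments and Minkowski's inequality reduces the bound for $E|K_i|^{2p}$ to an estimate of $\int(E|E^BI_i|^{2p})^{1/p}\,dz$.

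For $K_1$ I would follow (\ref{timeA-1})--(\ref{timeI1}): split $I_1=A_1\delta(u_t)-A_2\delta(u_t)$ with $A_1=\mathbf{1}_{\{\xi_s\le y<\xi_t\}}$ and $A_2=\mathbf{1}_{\{\xi_t\le y<\xi_s\}}$, use Cauchy--Schwarz under $E^B$ in the form $(E^B[A_i\delta(u_t)])^2\le E^B[A_i\delta(u_t)^2]$, and then Fubini and H\"older in $\Omega$ to separate $\|\delta(u_t)\|_{L^r(\Omega)}\le Ct^{-1/2}$ (Lemma~\ref{Edelta u}) from moments of $\int A_i\,dz$. The key new input is that $\int A_i\,dz$ is the Lebesgue measure of the set of initial conditions $z$ for which $\xi_s^{0,z}$ and $\xi_t^{0,z}$ lie on opposite sides of $y$. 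By the comparison principle the map $z\mapsto \xi_u^{0,z}$ is strictly increasing, so this set is an interval; the mean value theorem then expresses its length as $|B_s^t+I_s^t(h)|/\partial_z\xi_u^{0,z_\ast}$ for some $z_\ast$ and intermediate time $u\in\{s,t\}$, and since $B_s^t+I_s^t(h)$ has $L^q(\Omega)$ norm of order $(t-s)^{1/2}$ while $\partial_z\xi_u^{0,z}$ solves a linear SDE of the same type as (\ref{Drvt1}) whose solution $\exp(\pm M_{0,u}-\tfrac12\|h'\|^2u)$ has all negative moments bounded by the computation in Lemma~\ref{Dmoments}, we deduce $\|\int A_i\,dz\|_{L^q(\Omega)}\le C(t-s)^{1/2}$. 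Assembling the pieces gives $E|K_1|^{2p}\le C\|\mu\|^{2p}_{L^2(\mathbb{R})}t^{-p}(t-s)^{p/2}$, which is within the target.

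For $K_2$ I would parallel the derivation of (\ref{timeI2}). Using Lemma~\ref{dens ineq} (taking advantage of $E^B\delta=0$ to symmetrize, so that $P^B(\xi_s>y)$ is replaced by $P^B(|\xi_s-z|>|y-z|)$) and the Gaussian moment bound (\ref{Gaussian}), the argument of Lemma~\ref{dens bd} furnishes
\[
\bigl(E|E^BI_2(z)|^{2p}\bigr)^{1/p}\le C\exp\bigl(-C(z-y)^2/s\bigr)\,\|\delta(u_t-u_s)\|_{4p}^{2}.
\]
Bounding $\|\delta(u_t-u_s)\|_{4p}^2\le C(t-s)s^{-1}t^{-1}$ by Lemma~\ref{delta timediff} with $r=0$, and performing the Gaussian integration in $z$ (which contributes $s^{1/2}$), yields
\[
E|K_2|^{2p}\le C\|\mu\|^{2p}_{L^2(\mathbb{R})}\,(t-s)^p\,s^{-p/2}\,t^{-p}.
\]
For $s\ge t/2$ one has $s^{-p/2}\le Ct^{-p/2}$ and $(t-s)^{p/2}\le t^{p/2}$, so this collapses to $C(t-s)^{p/2}t^{-p}\le C(1+t^{-p})(t-s)^{p/2}$. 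For $s<t/2$ the target $(1+t^{-p})(t-s)^{p/2}\ge 2^{-p/2}(1+t^{-p})t^{p/2}$ is bounded below, and it absorbs the trivial uniform estimate $\|K_2\|_{2p}^{2p}\le C\|\mu\|_\infty^{2p}$ obtained from $|X_{r,1}(y)|\le\|\mu\|_\infty\int p^W(0,z;r,y)\,dz$ together with Lemma~\ref{dens bd}.

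The main technical obstacle is the $s^{-1/2}$ singularity in Lemma~\ref{delta timediff} at $r=0$, which prevents a single uniform-in-$s$ estimate of $K_2$; the case split $s\ge t/2$ versus $s<t/2$ is the device that reconciles the fine Malliavin-based estimate (valid when $s$ is comparable to $t$) with a crude $L^\infty(\mu)$--type bound (sufficient when $s$ is much smaller than $t$), using the $(1+t^{-p})$ factor in the target as margin. A secondary nuisance is justifying the monotonicity and differentiability in $z$ of $\xi_u^{0,z}$, which follows by the same Picard--argument and regularization of $h$ used to obtain $\xi_t\in\mathbb{D}^{2,2}$ in Section~3.
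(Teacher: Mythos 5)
Your decomposition $X_{t,1}(y)-X_{s,1}(y)=K_1+K_2$ with $I_1=(\mathbf{1}_{\{\xi_t>y\}}-\mathbf{1}_{\{\xi_s>y\}})\delta(u_t)$ and $I_2=\mathbf{1}_{\{\xi_s>y\}}\delta(u_t-u_s)$ is exactly the paper's, and you invoke the right lemmas. But there is a genuine gap in the $K_1$ estimate, caused by the very first step: applying Cauchy--Schwarz in $z$ to extract $\|\mu\|_{L^2}^2\int|E^BI_1|^2\,dz$ before touching $I_1$. If you then reduce to a pointwise-in-$z$ bound on $(E|E^BI_1|^{2p})^{1/p}$, as your opening paragraph states, the only smallness in $t-s$ available at fixed $z$ comes from $E[A_i]$, which carries a Gaussian factor of width $\sqrt{t}$ (not $\sqrt{t-s}$) and at best a density-type factor $((t-s)/s)^{1/(4p)}$; after integrating in $z$ and raising to the power $p$ this yields an exponent of $t-s$ that does not grow with $p$, nowhere near $(t-s)^{p/2}$. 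If instead you keep $\int A_i\,dz=|B_s^t+I_s^t(h)|$ as a global factor and separate it from $\delta(u_t)^2$ by a H\"older split in $z$ with conjugate exponents $(p_1,q_1)$, as in \eqref{timeA-2}, you obtain $(t-s)^{p/(2p_1)}$ with $p_1>1$ forced by the Minkowski step, i.e.\ $(t-s)^{p/2-\varepsilon}$ but never the stated $(t-s)^{p/2}$. The paper avoids this loss by pairing the weight differently: using $A_i^2=A_i$, it writes
\begin{equation*}
\Bigl|\int_{\mathbb{R}}\mu(z)\,A_i\,\delta(u_t)\,dz\Bigr|\le\Bigl(\int_{\mathbb{R}}|\mu(z)\delta(u_t)|^2dz\Bigr)^{1/2}\Bigl(\int_{\mathbb{R}}A_i\,dz\Bigr)^{1/2},
\end{equation*}
so that $|B_s^t+I_s^t(h)|$ enters with exponent exactly $1/2$ and, after Cauchy--Schwarz in $\Omega$ and Minkowski, contributes the full $(E|B_s^t+I_s^t(h)|^{2p})^{1/2}\le C(t-s)^{p/2}$ while $\|\delta(u_t)\|_{4p}^2\le Ct^{-1}$ supplies $t^{-p}$. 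Your asserted bound $E|K_1|^{2p}\le Ct^{-p}(t-s)^{p/2}$ is therefore not delivered by the steps you describe (the weaker $(t-s)^{p/2-\varepsilon}$ would still suffice for Theorem \ref{main}, but not for the proposition as stated).

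Two further remarks. For $K_2$ your computation is correct but self-inflictedly harder: extracting $\|\mu\|_{L^2}$ first leaves $\|\delta(u_t-u_s)\|_{4p}^2\sim(t-s)s^{-1}t^{-1}$ against a single Gaussian factor of mass $s^{1/2}$, hence the residual $s^{-p/2}$ and the case split $s\gtrless t/2$; the paper instead uses $\|\mu\|_\infty$ and keeps both $\exp(-(x-y)^2/(32cs))$ and $\|\delta(u_t-u_s)\|_{4p}$ to the first power inside the $dx$-integral, so $s^{1/2}\cdot s^{-1/2}$ cancels exactly and no case split is needed (your patch for $s<t/2$ does work, since the crude bound $\|K_2\|_{2p}\le C$ is uniform and the target is bounded below there). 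Finally, your comparison-principle/mean-value treatment of $\int A_i\,dz$ introduces a random intermediate point $z_\ast$ whose negative-moment control is not supplied by Lemma \ref{Dmoments} (which is a fixed-$z$ statement); the paper simply uses the identity $\{\xi_s\le y<\xi_t\}=\{y-B_r^t-I_r^t(h)<x\le y-B_r^s-I_r^s(h)\}$, giving $\bigl|\int_{\mathbb{R}}A_i\,dx\bigr|=|B_s^t+I_s^t(h)|$ directly.
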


\begin{proof}
Let $I_{1}\equiv \left( 1_{\left\{ \xi _{t}>y\right\} }-\mathbf{1}_{\left\{
\xi _{s}>y\right\} }\right) \delta \left( u_{t}\right) $ and $I_{2}\equiv
\mathbf{1}_{\left\{ \xi _{s}>y\right\} }\delta \left( u_{t}-u_{s}\right) $.
Then,%
\begin{equation*}
E\left\vert X_{t,1}\left( y\right) -X_{s,1}\left( y\right) \right\vert
^{2p}=E\left\vert \int_{\mathbb{R}}\mu (x)E^{B}[I_{1}+I_{2}]dx\right\vert
^{2p}.
\end{equation*}

Noticing that $\left\vert 1_{\left\{ \xi _{t}>y\right\} }-\mathbf{1}%
_{\left\{ \xi _{s}>y\right\} }\right\vert =1_{\left\{ \xi _{s}\leq y<\xi
_{t}\right\} }+\mathbf{1}_{\left\{ \xi _{t}\leq y<\xi _{s}\right\}
}=:A_{1}+A_{2},$ and applying Fubini's theorem, Jensen, H\"{o}lder and
Minkowski's inequalities, we obtain%
\begin{eqnarray*}
&&E\left\vert \int_{\mathbb{R}}\mu (x)E^{B}\left\vert I_{1}\right\vert
dx\right\vert ^{2p}\leq \sum_{i=1,2}E\left\vert \int_{\mathbb{R}}\mu (x)E^{B}%
\left[ A_{i}\delta \left( u_{t}\right) \right] dx\right\vert ^{2p} \\
&\leq &\sum_{i=1,2}E[\left( \int_{\mathbb{R}}\left\vert \mu (x)\delta \left(
u_{t}\right) \right\vert ^{2}dx\right) ^{p}\left\vert \int_{\mathbb{R}%
}A_{i}dx\right\vert ^{p}] \\
&\leq &\sum_{i=1,2}\left( \int_{\mathbb{R}}\left\vert \mu (x)\right\vert
^{2}\left\Vert \delta \left( u_{t}\right) \right\Vert _{4p}^{2}dx\right)
^{p}\left( E\left\vert B_{s}^{t}+I_{s}^{t}(h)\right\vert ^{2p}\right) ^{%
\frac{1}{2}} \\
&\leq &C\left( 1+\left\Vert h\right\Vert ^{2}\right) \left\Vert \mu
\right\Vert _{L^{2}}^{2p}t^{-p}\left( t-s\right) ^{\frac{1}{2}p}.
\end{eqnarray*}

For the term $I_{2},$ using Minkowski's inequality, $\left( \ref{dens ctrl}%
\right) $ and (\ref{tdelta}) \ with $r=0$ we have
\begin{eqnarray*}
&&E\left\vert \int_{\mathbb{R}}\mu (x)E^{B}\left\vert I_{2}\right\vert
dx\right\vert ^{2p}=\left( \int_{\mathbb{R}}\left\vert \mu (x)\right\vert
\left( E\left\vert E^{B}\mathbf{1}_{\left\{ \xi _{s}>y\right\} }\delta
\left( u_{t}-u_{s}\right) \right\vert ^{2p}\right) ^{\frac{1}{2p}}dx\right)
^{2p} \\
&\leq &C\left\Vert \mu \right\Vert _{\infty }^{2p}\left( \int_{\mathbb{R}%
}\exp \left( -\frac{\left( x-y\right) ^{2}}{32cs}\right) \left\Vert \delta
\left( u_{t}-u_{s}\right) \right\Vert _{4p}dx\right) ^{2p} \\
&\leq &C\left\Vert \mu \right\Vert _{\infty }^{2p}t^{-p}\left( t-s\right)
^{p}.
\end{eqnarray*}%
Then we can conclude (\ref{time X1}).
\end{proof}

\begin{proof}[Proof of Proposition \protect\ref{time}]
It follows from Proposition \ref{timeX2prop} and Proposition \ref{timeX1prop}
.
\end{proof}

\begin{proof}[Proof of Theorem \protect\ref{main}]
It follows from Proposition \ref{Space} and Proposition \ref{time}.
\end{proof}

\smallskip \noindent \textbf{Acknowledgment} The authors would like to thank
Jie Xiong for bringing us of this topic.

\end{document}